\theoremstyle{plain}
\newtheorem{teor}{Theorem}[section]
\newtheorem{thm}{Theorem}[section]
\newtheorem{prop}{Proposition}[section]
\newtheorem{lema}{Lemma}[section]
\newtheorem{lemma}{Lemma}[section]
\theoremstyle{remark}
\newcommand{\R}{\mathbb{R}}
\newcommand{\seq}{\ensuremath{0\to Y\to X\to Z\to 0\:}}
\newcommand{\aproof}{\begin{proof}}
\newcommand{\zproof}{\end{proof}}
\newcommand\PO{{\mathrm{PO}}}
\newcommand{\FUN}{\mathsf}
\newcommand{\lop}{\curvearrowright }
\newtheorem{defn}[thm]{Definition}
\newcommand{\adef}{\begin{defn}}
\newcommand{\zdef}{\end{defn}}
\begin{document}
\title{On the Bounded Approximation Property  in Banach spaces}
\author{Jes\'us M. F. Castillo and Yolanda Moreno}
\address{Departamento de Matem\'aticas, Universidad de Extremadura, Avenida de Elvas s/n, 06011 Badajoz, Spain}
             \email{castillo@unex.es}
\address{Escuela Polit\'ecnica, Universidad de Extremadura, Avenida de la Universidad s/n, 10071 C\'aceres, Spain}
             \email{ymoreno@unex.es}
\thanks{This research has been supported in part by project MTM2010-20190-C02-01 and the program Junta
de Extremadura GR10113 IV Plan Regional I+D+i, Ayudas a Grupos de
Investigaci\'on}


\maketitle

\begin{abstract} We prove that the kernel of a quotient operator from an $\mathcal L_1$-space onto
a Banach space $X$ with the Bounded Approximation Property (BAP)
has the BAP. This completes earlier results of Lusky --case
$\ell_1$-- and Figiel, Johnson and Pe\l czy\'nski --case $X^*$
separable. Given a Banach space $X$, we show that if the kernel of
a quotient map from some $\mathcal L_1$-space onto $X$ has the BAP
then every kernel of every quotient map from any $\mathcal
L_1$-space onto $X$ has the BAP. The dual result for $\mathcal
L_\infty$-spaces also hold: if for some $\mathcal L_\infty$-space
$E$ some quotient $E/X$ has the BAP then for every $\mathcal
L_\infty$-space $E$ every quotient $E/X$ has the BAP.
\end{abstract}

\section{Preliminaries}

An exact sequence $0 \to Y \to X \to Z \to 0$ in the category of
Banach spaces and bounded linear operators is a diagram in which
the kernel of each arrow coincides with the image of the
preceding; the middle space $X$ is also called a {\it twisted sum}
of $Y$ and $Z$. By the open mapping theorem this means that $Y$ is
isomorphic to a subspace of $X$ and $Z$ is isomorphic to the
corresponding quotient. Two exact sequences $0\to Y \to X \to Z
\to 0$ and $0\to Y \to X_1 \to Z \to 0$ are said to be
\emph{equivalent} if there exists an operator $T:X\longrightarrow
X_{1}$ making commutative the diagram
$$
\begin{CD}
0 @>>>Y@>>>X@>>>Z@>>>0\\
&&\| &&@VTVV \|\\
0 @>>>Y@>>>X_{1}@>>>Z@>>>0.\\
\end{CD}$$ The classical 3-lemma (see \cite[p. 3]{castgonz}) shows that $T$ must be an isomorphism. An exact sequence is
said to split if it is equivalent to the trivial sequence $0 \to Y
\to Y \oplus Z \to Z \to 0$. There is a correspondence (see
\cite[Thm. 1.5.c, Section 1.6]{castgonz}) between exact sequences $0 \to Y \to X
\to Z \to 0$ of Banach spaces and the so-called $z$-linear maps
which are homogeneous maps $\omega: Z \lop Y$ (we use this
notation to stress the fact that these are not linear maps) with
the property that there exists some constant $C>0$ such that for
all finite sets $x_1, \dots, x_n \in Z$ one has
$\|\omega(\sum_{n=1}^N x_n) - \sum_{n=1}^N \omega(x_n)\| \leq C
\sum_{n=1}^N \|x_n\|$. The infimum of those constants $C$ is
called the $z$-linearity constant of $F$ and denoted $Z(\omega)$.

The process to obtain a $z$-linear map out from an exact sequence
$0\to Y \stackrel{i}\to X \stackrel{q}\to Z \to 0$ is the
following: get a homogeneous bounded selection $b: Z \to X$ for
the quotient map $q$, and then a linear $\ell: Z \to X$ selection
for the quotient map. Then $\omega= b - \ell$ is a $z$-linear map.
A $z$-linear map $\omega : Z \curvearrowright Y$ induces the exact
sequence of Banach spaces $0 \to Y \stackrel{j}\to Y \oplus_\omega
Z \stackrel{p}\to Z \to 0$ in which $Y \oplus_\omega X$ means the
completion of the vector space $Y \times X$ endowed with the
quasi-norm $\|(y,x) \|_\omega = \|y - \omega x\| + \|x\|$. The
$z$-linearity of $\omega$ makes this quasi-norm equivalent to a
norm (see \cite{castperdu}). The embedding is  $j(y)= (y,z)$ while
the quotient map is $p(y,z)=z$. The exact sequences
$$
\begin{CD}
0 @>>>Y@>i>>X@>q>>Z@>>>0\\
&&@| @VVTV @|\\
0 @>>>Y@>>j>Y\oplus_\omega Z@>>p>Z@>>>0
\end{CD}$$ are equivalent setting as $T:X\to  Y\oplus_\omega Z$ the
operator $T(x) = (x - \ell qx, qx)$. We will use the notation
$0\to Y \stackrel{i}\to X \stackrel{q}\to Z \to 0\equiv \omega$ to
mean that $\omega$ is a $z$-linear map associated to that exact
sequence. Two $z$-linear maps $\omega, \omega': Z\lop  Y$ are said
to be equivalent, and we write $\omega \equiv \omega'$, if the
induced exact sequences are equivalent. Two maps $\omega, \omega':
Z \lop Y$ are equivalent if and only if the difference $\omega -
\omega'$ can be written as $B +L$, where $B: Z \to Y$ is a
homogeneous bounded map and $L: Z \to
Y$ is a linear map.\\

Recall from \cite{castmoresob} the definition and basic properties
of the $z$-dual of a Banach space $X$. We define {\it the}
$z$-dual of $X$ as the Banach space $ X^z = [ Z_L(X,\R), Z(\cdot)
]$ of $z$-linear maps $\omega: X \lop \R$ such that
$\omega(e_\gamma)=0$ for a prefixed Hamel basis $(e_\gamma)$ of
$X$. The space $co_z(X)$ is defined as the closed linear span in
$(X^z)^*$ of the evaluation functionals $\delta_x: X^z \to \R$
given by $\delta_x(\omega) = \omega(x)$. Given a $z$-linear map
$\omega: X \lop Y$ and  a Hamel basis $(e_\gamma)$ we denote
$\nabla \omega$ the so-called \emph{canonical form} of $\omega$,
namely, the map

$$\nabla \omega (p) = \omega(p) - \sum \lambda_j \omega(e_j)$$where $p= \sum \lambda_j e_j$. The two basic properties of
$co_z(X)$ are displayed in the following proposition; the proof
can be found in \cite[Prop. 3.2]{castmoresob}:

\begin{prop}\label{initial} There is a $z$-linear map $\Omega_X: X \curvearrowright co_z(X)$
with the property that given a $z$-linear map $\omega: X
\curvearrowright Y$  then there exists an operator $\phi_\omega:
co_z(X) \to Y$ such that $\phi_\omega \Omega_X = \nabla \omega$
and $\|\phi_\omega\| = Z(\omega)$.
\end{prop}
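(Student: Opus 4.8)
The plan is to take for $\Omega_X$ the evaluation map $x\mapsto\delta_x$ (or, if a map vanishing on the fixed Hamel basis is wanted, its canonical form, which yields the same factorization since the correction terms are annihilated below), and to define $\phi_\omega$ on the dense linear span of the $\delta_x$ inside $co_z(X)$ by $\phi_\omega(\delta_x)=\nabla\omega(x)$, extended by linearity and then by continuity. First I would check that $\Omega_X$ is genuinely $z$-linear: homogeneity is immediate from that of each $\omega\in X^z$, and for a finite family $x_1,\dots,x_N$ the functional $\delta_{\sum x_n}-\sum\delta_{x_n}$ evaluated at $\omega$ equals $\omega(\sum x_n)-\sum\omega(x_n)$, of modulus at most $Z(\omega)\sum\|x_n\|$; taking the supremum over the unit ball of $X^z$ gives $\|\delta_{\sum x_n}-\sum\delta_{x_n}\|_{(X^z)^*}\le\sum\|x_n\|$, so $Z(\Omega_X)\le 1$. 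The identity $\phi_\omega\Omega_X=\nabla\omega$ is then built into the definition.

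The crucial point is that $\phi_\omega$ is well defined, and here I would pass to scalars through the dual of $Y$. If $\sum_i a_i\delta_{x_i}=0$ as an element of $(X^z)^*$, then $\sum_i a_i\omega'(x_i)=0$ for every $\omega'\in X^z$. Given $y^*\in Y^*$, the composition $y^*\nabla\omega$ is a scalar $z$-linear map vanishing on the prefixed Hamel basis (because $\nabla\omega$ does), hence $y^*\nabla\omega\in X^z$; applying the previous identity to it yields $y^*\big(\sum_i a_i\nabla\omega(x_i)\big)=0$. As $y^*$ is arbitrary, $\sum_i a_i\nabla\omega(x_i)=0$, so $\phi_\omega$ is unambiguously defined on the span of the $\delta_x$.

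For the norm I would prove the two inequalities separately. For $\|\phi_\omega\|\le Z(\omega)$, fix $\xi=\sum_i a_i\delta_{x_i}$ and compute $\|\phi_\omega(\xi)\|_Y$ by Hahn--Banach: for $\|y^*\|\le 1$ one has $Z(y^*\nabla\omega)\le\|y^*\|\,Z(\nabla\omega)=Z(\omega)$ (using $Z(\nabla\omega)=Z(\omega)$, as the two differ by a linear map), and since $\langle y^*,\phi_\omega(\xi)\rangle=\xi(y^*\nabla\omega)$ we get $|\langle y^*,\phi_\omega(\xi)\rangle|\le Z(\omega)\|\xi\|$; the supremum over $y^*$ gives the bound and justifies the continuous extension to all of $co_z(X)$. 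For the reverse inequality I would exploit the definition of $Z(\omega)=Z(\nabla\omega)$ as an infimum: given $\varepsilon>0$ choose $x_1,\dots,x_N$ with $\|\nabla\omega(\sum x_n)-\sum\nabla\omega(x_n)\|>(Z(\omega)-\varepsilon)\sum\|x_n\|$, and set $\xi=\delta_{\sum x_n}-\sum\delta_{x_n}$; then $\|\xi\|\le\sum\|x_n\|$ by the computation above, while $\phi_\omega(\xi)=\nabla\omega(\sum x_n)-\sum\nabla\omega(x_n)$, so that $\|\phi_\omega(\xi)\|/\|\xi\|>Z(\omega)-\varepsilon$.

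The main obstacle I anticipate is the well-definedness step: everything hinges on recognizing that the correct elements to test against are exactly the scalarizations $y^*\nabla\omega$, and that these genuinely lie in $X^z$, i.e. are $z$-linear and vanish on the fixed Hamel basis. Once that is secured, both the factorization identity and the isometric estimate $\|\phi_\omega\|=Z(\omega)$ follow from routine supremum computations against the unit ball of $X^z$.
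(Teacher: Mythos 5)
Your argument is correct and is exactly the canonical construction: the paper does not reprove this statement but refers to \cite[Prop.\ 3.2]{castmoresob}, where the same route is taken --- $\Omega_X=\delta$, well-definedness and the two norm inequalities obtained by scalarizing through $Y^*$ against the elements $y^*\nabla\omega\in X^z$. Your parenthetical observation that $\delta_{e_\gamma}=0$ in $(X^z)^*$ (so that $\delta$ already coincides with its canonical form) is the right way to dispose of the Hamel-basis normalization, and the lower estimate via $\xi=\delta_{\sum x_n}-\sum\delta_{x_n}$ is the standard closing step.
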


It is easy to give examples of $z$-linear maps on finite
dimensional spaces with infinite-dimensional range: set $B: \R^2
\to C[0,1]$ defined by $B(e^{i\theta}) = x^\theta$, $0\leq\theta <
\pi$, and by homogeneity on the rest. It is however possible to
modify  a $z$-linear map defined on a finite-dimensional Banach
space so that the image is finite dimensional \cite[Lemma 2.3]{castmoresob}.
We need an improvement of that result:

\begin{lemma}\label{fin} Let $\Omega:X \curvearrowright Y$ be a $z$-linear
map and let $F$ be a finite dimensional subspace of $X$. Given a
finite set of points $x_1,\dots,x_n$ in the unit sphere of $F$ so
that $x_i\neq \pm x_j$ for $i\neq j$ and $\varepsilon>0$, there is
a $z$-linear map $\Omega_F: X\lop Y$ verifying:
\begin{enumerate}
\item $\Omega_F x_i = \Omega x_i$ for all $1\leq i\leq n$. \item
$\| \Omega_F - \Omega \|\leq (1+\varepsilon)Z(\Omega)$. \item
$Z(\Omega_F)\leq (3+\varepsilon)Z(\Omega)$ \item The image of $F$
by $\Omega_F$ spans a finite dimensional space.\end{enumerate}
\end{lemma}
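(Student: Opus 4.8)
The plan is to realize $\Omega_F$ as $\Omega$ perturbed by a bounded homogeneous correction supported on $F$. Concretely, I would look for a homogeneous map $B\colon F\to Y$ whose range spans a finite-dimensional subspace of $Y$, which vanishes at each $x_i$, and which satisfies $\|B\|=\sup_{u\in F,\,\|u\|=1}\|Bu\|\le(1+\varepsilon/2)Z(\Omega)$. Extending $B$ by $0$ outside $F$ (legitimate since $F$ is a linear subspace, so the zero extension $\tilde B$ stays homogeneous and keeps the same norm), I would set $\Omega_F=\Omega+\tilde B$ on all of $X$. A one-line computation shows that adding a bounded homogeneous map preserves $z$-linearity and that $Z(\Omega+\tilde B)\le Z(\Omega)+2\|\tilde B\|$; hence (2) and (3) reduce to the norm bound on $B$, (1) reduces to $B(x_i)=0$, and (4) to the finite dimensionality of the span of the range of $B$ on $F$.

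So everything comes down to building $B$ on the finite-dimensional space $F$, where I would exploit the compactness of the unit sphere $S_F$ through a polytopal approximation. First choose a finite symmetric set $W\subseteq S_F$ that is a fine net of $S_F$ and contains $\pm x_1,\dots,\pm x_n$; the hypothesis $x_i\neq\pm x_j$ guarantees these $2n$ points are distinct. For a sufficiently fine net the symmetric polytope $P=\mathrm{conv}(W)$ satisfies $(1-\delta)B_F\subseteq P\subseteq B_F$ with $\delta$ as small as desired. Next I would fix a symmetric simplicial subdivision of $\partial P$ whose vertex set is exactly $W$ (obtained by star-subdividing at those net points that fail to be extreme); this is the step that forces each $x_i$ to be an honest vertex. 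For $s\in S_F$ let $s/\mu$, with $1\le\mu\le(1-\delta)^{-1}$, be the radial projection of $s$ onto $\partial P$, and write $s/\mu=\sum_l\beta_l(s)\,w_l$ in the barycentric coordinates of a simplex of the subdivision containing it (these are consistent on common faces, so no ambiguity arises). Setting $\alpha_l(s)=\mu\beta_l(s)$ gives $s=\sum_l\alpha_l(s)\,w_l$ with $\sum_l|\alpha_l(s)|=\mu$. I would then define $\Omega_F(s)=\sum_l\alpha_l(s)\,\Omega(w_l)$ on $S_F$, extend by homogeneity to $F$ (the symmetry of $W$ and of the subdivision making the extension well defined and odd), and put $B=\Omega_F-\Omega$ on $F$. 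Since only the finitely many values $\Omega(w_l)$ occur, the range spans the finite-dimensional space $\mathrm{span}\{\Omega(w_l)\}$, which is (4).

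The estimate is where $z$-linearity enters. Using homogeneity in the form $\alpha_l\Omega(w_l)=\Omega(\alpha_l w_l)$ together with the defining inequality for $Z(\Omega)$, and recalling $\sum_l\alpha_l(s)w_l=s$, I obtain
$$\|\Omega_F(s)-\Omega(s)\|=\Big\|\sum_l\Omega(\alpha_l(s)w_l)-\Omega\big(\textstyle\sum_l\alpha_l(s)w_l\big)\Big\|\le Z(\Omega)\sum_l|\alpha_l(s)|\,\|w_l\|\le \mu\,Z(\Omega).$$
Since $\mu\le(1-\delta)^{-1}$, choosing $\delta$ with $(1-\delta)^{-1}\le 1+\varepsilon/2$ yields $\|B\|\le(1+\varepsilon/2)Z(\Omega)$, hence (2) as well as $Z(\Omega_F)\le Z(\Omega)+2\|\tilde B\|\le(3+\varepsilon)Z(\Omega)$, which is (3). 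For $s=x_i$ the point already lies on $\partial P$ with $\|x_i\|=1$ and is a vertex of the subdivision, so $\mu=1$ and its barycentric coordinates are trivial; thus $\Omega_F(x_i)=\Omega(x_i)$ and $B(x_i)=0$, giving (1).

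The main obstacle, and the reason this refines \cite[Lemma 2.3]{castmoresob}, is the simultaneous control of all three quantities. Forcing the correction's norm below $(1+\varepsilon)Z(\Omega)$ requires the affine representations of the sphere points to have total variation $\sum_l|\alpha_l|$ arbitrarily close to $1$ (hence the near-isometric inner polytope and radial projection), while condition (1) with the \emph{exact} equality $\Omega_F(x_i)=\Omega(x_i)$ requires the marked points to be represented trivially, which can fail when $\partial B_F$ is not strictly convex. Reconciling these — a fine net for the norm bound, the star-subdivision making each $x_i$ a genuine vertex with trivial coordinates, and the symmetry ensuring the homogeneous extension is well defined and odd — is the delicate part; once the representation $s=\sum_l\alpha_l(s)w_l$ is in hand, the analytic estimates are immediate.
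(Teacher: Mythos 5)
Your proof is correct and takes essentially the same route as the paper's: replace $\Omega$ on $F$ by an interpolant built from its values on a finite set of points containing the $x_i$, bound the resulting bounded homogeneous perturbation by (roughly) $Z(\Omega)$ times the total mass of the coefficients, and get (3) from $Z(\Omega_F)\le Z(\Omega)+2\|\Omega_F-\Omega\|$. The only real difference is that your inscribed polytope with a symmetric simplicial subdivision and radial projection makes rigorous the well-definedness of the representation, a point the paper dispatches with ``establish some order between the convex combinations.''
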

\begin{proof} Let $\{e_\gamma\}_\gamma $ be a Hamel basis for $X$ formed by norm one vectors.
Assume that $F\subset [e_{\gamma_1}, \dots, e_{\gamma_k}]$. Fix a
finite set $A= \{x_1,\dots, x_n\}$ of elements of norm at most
$1+\varepsilon$ such that the unit ball of $F$ is contained in the
convex hull of $A$. Be sure that the set $A$ contains all
$e_{\gamma_1}, \dots, e_{\gamma_k}$ and that $x_i\neq \pm x_j$ for
$i\neq j$. Modify now $\Omega$ as follows: $\Omega_F a= \Omega a$
for all $a\in A$; if $p$ is a norm one element of $F$ and $p=\sum
\theta_ix_i$ then  $\Omega_F(\sum \theta_i x_i)= \sum \theta_i
\Omega x_i$. It is necessary to establish some order between the
convex combinations of the $x_i$ to have the value of $\Omega_F$
univocally defined and equal to $\Omega a$ at the points of $A$,
but this can be easily done in many ways. Condition (1) is
therefore fulfilled. We show (2): if $p=\sum \theta_ix_i$ is a
norm one element of $F$ then
\begin{eqnarray*} \|\Omega_F\left (\sum \theta_i x_i\right) - \Omega\left (\sum \theta_i x_i\right) \|
&=& \|\sum \theta_i \Omega x_i  - \Omega\left (\sum \theta_i x_i\right)\| \\
&\leq& Z(\Omega) \sum \theta_i \|x_i\| \\
&\leq&(1+\varepsilon) Z(\Omega).
\end{eqnarray*}
If $p$ is a point not in $F$ then $\Omega_F(p)=\Omega(p)$, and
thus (2) is proved. Observe that if $\Lambda$ is a bounded
homogeneous map then $Z(\Lambda)\leq 2 \|\Lambda\|$. This and the
previous estimate yield (3):

\begin{eqnarray*} Z(\Omega_F) = Z(\Omega_F -\Omega + \Omega)
&\leq& Z(\Omega_F -\Omega)  + Z(\Omega)\\
&\leq& 2\|\Omega_F -\Omega\| + Z(\Omega)\\
&\leq& 2(1+\varepsilon)Z(\Omega) + Z(\Omega)\\
&=&3(1+\varepsilon)Z(\Omega).
\end{eqnarray*}

Condition (4) is clear.
\end{proof}

A few facts about the connections between $z$-linear maps and the
associated exact sequences will be needed in this paper. Given an
exact sequence \seq $\equiv \omega$ and an operator $\alpha: Y\to
Y'$, there is a commutative diagram
$$
\begin{CD}
0 @>>>Y@>j>>Y\oplus_\omega Z@>p>>Z@>>>0\\
&&@V{\alpha}VV @V{\tau}VV @|\\
0 @>>>Y'@>>{j'}>Y'\oplus_{\alpha \omega} Z @>>{p'}>Z@>>>0
\end{CD}$$in which $\tau(y,z)= (\alpha y, z)$. Moreover,

\begin{lemma}\label{polemma} If one has a commutative diagram
\begin{equation}\label{po}
\begin{CD}
0 @>>>Y@>i>>X@>q>>Z@>>>0 \equiv \omega\\
&&@V{\alpha}VV @VTVV @|\\
0 @>>>Y'@>i'>>X'@>q'>>Z@>>>0\end{CD}\end{equation}
then
$$
\begin{CD}
0 @>>>Y'@>i'>>X'@>q'>>Z@>>>0 \equiv \alpha\omega\end{CD}$$
i.e., $\alpha \omega$ is a $z$-linear map associated with the
lower sequence in (\ref{po}).
\end{lemma}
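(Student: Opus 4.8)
The plan is to produce, directly from the given diagram, a bounded homogeneous selection and a linear selection for the lower quotient map $q'$, and to verify that the $z$-linear map they determine is precisely $\alpha\omega$. This simply transports, through $T$, the recipe recalled in the Preliminaries for passing between an exact sequence and an associated $z$-linear map.

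First I would record the two commutativity relations read off from (\ref{po}): since the right-hand vertical arrow is the identity on $Z$, the right square gives $q'T = q$, while the left square gives $Ti = i'\alpha$. Next, starting from the top row $\equiv\omega$, choose a homogeneous bounded selection $b\colon Z\to X$ and a linear selection $\ell\colon Z\to X$ for $q$, so that $i\omega = b - \ell$; these exist by the construction recalled above and $\omega$ is, by definition, the associated $z$-linear map.

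Then I would push both selections forward through $T$, setting $b' = Tb$ and $\ell' = T\ell$. Using $q'T = q$ one checks at once that $q'b' = qb = \mathrm{id}_Z$ and $q'\ell' = q\ell = \mathrm{id}_Z$, so both are selections for $q'$; moreover $b'$ is bounded and homogeneous because $T$ is a bounded operator and $b$ is bounded homogeneous, while $\ell'$ is linear because $T$ and $\ell$ are both linear. Hence $b'$ and $\ell'$ are admissible selections, and they define a $z$-linear map $\omega'$ associated with the lower sequence via $i'\omega' = b' - \ell'$. Finally, computing
$$ i'\omega' = b' - \ell' = T(b - \ell) = Ti\omega = i'\alpha\omega $$
and invoking the injectivity of $i'$, I conclude $\omega' = \alpha\omega$, which is exactly the assertion that $\alpha\omega$ is a $z$-linear map associated with the lower sequence in (\ref{po}).

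There is essentially no serious obstacle here; the argument is formal once the two commutativity relations are in hand. The only points requiring care are keeping track of the identifications of $Y$ and $Y'$ with the images $i(Y)=\ker q$ and $i'(Y')=\ker q'$, so that the identity $i\omega = b-\ell$ (and its analogue below) is read correctly, and confirming that transporting the bounded selection $b$ by the bounded operator $T$ preserves its boundedness. Everything else follows directly from $q'T=q$ and $Ti=i'\alpha$.
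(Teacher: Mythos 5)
Your proof is correct, and it reaches the conclusion by a slightly different (and arguably cleaner) route than the paper. The paper's proof exhibits an explicit equivalence $\tau'\colon Y'\oplus_{\alpha\omega}Z\to X'$, namely $\tau'(y',z)=y'+T(0,z)$, and verifies its continuity by a direct norm estimate in the twisted-sum quasinorm. You instead push the selections forward, observing that $b'=Tb$ and $\ell'=T\ell$ are respectively a bounded homogeneous and a linear selection for $q'$ (using $q'T=q$), so that $i'^{-1}(b'-\ell')$ is by the recipe of the Preliminaries an associated $z$-linear map, and then compute $b'-\ell'=Ti\omega=i'\alpha\omega$ from $Ti=i'\alpha$. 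Unwinding your argument, the equivalence it implicitly produces is $(y',z)\mapsto i'y'+T\ell z$, which is exactly the paper's $\tau'$; the difference is that you delegate the continuity check to the general fact that any (bounded homogeneous selection) minus (linear selection) yields an associated $z$-linear map, rather than redoing the estimate. One small point you gloss over: the hypothesis ``top row $\equiv\omega$'' only guarantees that $\omega$ agrees with some $i^{-1}(b-\ell)$ up to a summand $B+L$ with $B$ bounded homogeneous and $L$ linear; but since such a summand can be absorbed into the selections (replace $b$ by $b+iB$ and $\ell$ by $\ell-iL$), or alternatively since $\alpha B+\alpha L$ is again of the form bounded-plus-linear, your normalization $i\omega=b-\ell$ is harmless and the conclusion is unaffected.
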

\begin{proof} What one has to check is that the two sequences
$$
\begin{CD}
0 @>>>Y'@>>{j'}>Y'\oplus_{\alpha \omega} Z @>>{p'}>Z@>>>0\\
&&@| @V{\tau'}VV @|\\
0 @>>>Y'@>>i'>X' @>>{q'}>Z@>>>0
\end{CD}$$
are equivalent; and this happens via the map $\tau'(y,z) = y' +T(0,z)$.
Indeed, that the map $\tau'$ makes the diagram commutative is clear; its
continuity follows from the estimate:

\begin{eqnarray*} \|y' + T(0,z) \| &\leq& \|y' - \alpha \omega z\|
+ \|\alpha \omega z + T(0,z)\|\\
&=&\|y' - \alpha \omega z\| + \|\alpha \omega z + T(\omega z, z) + T(-\omega z,0)\|\\
&\leq &\|y' - \alpha \omega z\| + \|T(\omega z,z)\|\\
&\leq & \|y' - \alpha \omega z\| + \|(\omega z, z)\|_\omega\\
&= & \|y' - \alpha \omega z\| + \|z\|\\
&= & \|(y',z)\|_{\alpha \omega}.\end{eqnarray*}
\end{proof}

Analogously, given an exact sequence \seq $\equiv
\omega$ and an operator $\gamma: Z'\to Z$, there is a commutative
diagram
$$
\begin{CD}
0 @>>>Y@>j>>Y\oplus_\omega Z@>p>>Z@>>>0\\
&&@| @A{\tau}AA @AA{\gamma}A \\
0 @>>>Y@>>{j'}>Y\oplus_{\omega\gamma} Z' @>>{p'}>Z'@>>>0
\end{CD}$$
in which $\tau(y,z')= (y, \gamma z')$. Moreover,

\begin{lemma}\label{pblemma} If one has a
commutative diagram \begin{equation}\label{pb}\begin{CD}
0 @>>>Y@>i>>X@>q>>Z@>>>0 \equiv \omega\\
&& @| @ATAA @AA{\gamma}A\\
0 @>>>Y'@>>i'>X' @>>{q'}>Z'@>>>0
\end{CD}\end{equation}
then
$$
\begin{CD}
0 @>>>Y@>i'>>X'@>q'>>Z'@>>>0 \equiv \omega\gamma\end{CD}$$
i.e., $\omega\gamma$ is a $z$-linear map associated with the
lower sequence in (\ref{pb}).
\end{lemma}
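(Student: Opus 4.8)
The plan is to mirror the proof of Lemma~\ref{polemma}. The canonical construction displayed just before the statement already realizes $\omega\gamma$ as a $z$-linear map of the sequence $0\to Y\to Y\oplus_{\omega\gamma}Z'\to Z'\to 0$, so it suffices to prove that the bottom sequence of~(\ref{pb}) is equivalent to this canonical one. By the 3-lemma any operator making the resulting ladder commute is automatically an isomorphism, so I only need to exhibit one bounded operator in either direction. Since the only structural map at my disposal points out of $X'$ (namely $T:X'\to X$), the natural candidate is a map $S:X'\to Y\oplus_{\omega\gamma}Z'$ rather than its inverse.

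Fix a linear selection $\ell:Z\to X$ and a bounded homogeneous selection $b:Z\to X$ of $q$ with $\omega=b-\ell$. I would then set
$$ S x' = \bigl(\, Tx' - \ell\gamma q'x',\ q'x' \,\bigr), $$
where the first coordinate, a priori an element of $\ker q$, is read as an element of $Y$ through the identifications $\ker q\cong Y\cong\ker q'$ furnished by $i$, $i'$ and the relation $Ti'=i$. That the first coordinate lies in $\ker q$ is the only algebraic check: $q(Tx'-\ell\gamma q'x') = \gamma q'x' - \gamma q'x' = 0$, using the commutativity $qT=\gamma q'$ of~(\ref{pb}) together with $q\ell=\mathrm{id}_Z$. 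Commutativity of the ladder is then routine: $p'S=q'$ is immediate, while $Si'=j'$ follows from $Ti'=i$ and $q'i'=0$.

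The crux is the boundedness of $S$, i.e.\ an estimate $\|Sx'\|_{\omega\gamma}\le C\|x'\|$. Expanding $\|Sx'\|_{\omega\gamma} = \|(Tx'-\ell\gamma q'x') - \omega\gamma(q'x')\| + \|q'x'\|$ and using $\omega\gamma(q'x') = \omega(\gamma q'x') = b(\gamma q'x') - \ell(\gamma q'x')$, the two unbounded contributions coming from the linear selection $\ell$ cancel exactly, leaving the first term equal to $\|Tx' - b(\gamma q'x')\|$. Since both $T$ and $b$ are bounded, this is dominated by $(\|T\| + \|b\|\,\|\gamma\|\,\|q'\|)\|x'\|$, and $\|q'x'\|\le\|q'\|\,\|x'\|$ finishes the bound. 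The one delicate point, exactly as in Lemma~\ref{polemma}, is that $\ell$ is merely linear and not bounded, so $S$ is visibly a bounded operator only after this cancellation; the whole point of the quasi-norm $\|\cdot\|_{\omega\gamma}$ is to absorb the unbounded part. With $S$ bounded and the ladder commutative, the 3-lemma forces $S$ to be an isomorphism, whence the bottom sequence of~(\ref{pb}) is equivalent to $0\to Y\to Y\oplus_{\omega\gamma}Z'\to Z'\to 0$ and is therefore associated with $\omega\gamma$. (Alternatively, one can avoid constructing $S$ and verify directly that any $z$-linear map $\omega'$ of the bottom sequence differs from $\omega\gamma$ by a bounded-plus-linear map: $Tb'$ and $T\ell'$ are, respectively, a bounded and a linear selection of $q$ over $\gamma$, and comparing them with $b\gamma$ and $\ell\gamma$ exhibits $\omega'-\omega\gamma$ in the required form.)
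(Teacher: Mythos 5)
Your proof is correct and essentially identical to the paper's: the authors also define the comparison operator $\tau'(x')=(Tx'-\ell\gamma q'x',\,q'x')$ and establish its boundedness through exactly the cancellation $Tx'-\ell\gamma q'x'-\omega\gamma q'x'=Tx'-b\gamma q'x'$ that you identify as the crux. The only differences are cosmetic (the paper leaves the $\ker q\cong Y$ identification and the appeal to the 3-lemma implicit).
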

\begin{proof} One has to check that the two sequences
$$
\begin{CD}
0 @>>>Y@>>{j'}>Y\oplus_{\omega \gamma} Z' @>>{p'}>Z'@>>>0\\
&&@| @A{\tau'}AA @|\\
0 @>>>Y@>>i'>X' @>>{q'}>Z'@>>>0
\end{CD}$$
are equivalent; and this happens via the map $\tau'(x') =(Tx'- \ell\gamma q' x',
q'x')$, where $\ell$ is a linear selection for $q$ such that for
some bounded selection $b$ for $q$ one has $\omega = b - \ell$.
Which means that $\omega \gamma$ is a $z$-linear map associated
with the lower sequence. The commutativity of the diagram is clear
and the continuity of $\tau'$ follows from the estimate:
\begin{eqnarray*} \|(Tx'- \ell\gamma q' x',
q'x') \|_{\omega \gamma} &= & \|Tx'- \ell\gamma q' x' -\omega
\gamma q'x' \| + \|q'x'\|\\
&= & \|Tx'- b \gamma q'x' \| + \|q'x'\|\\
&\leq & (\|T\|+ \|b\|\|\gamma\| +1) \|x'\|.\end{eqnarray*}
\end{proof}

As an immediate consequence we have.\\

\begin{lemma} \label{diagonales}$\;$
\begin{enumerate}
\item Given a
commutative diagram like (\ref{po}) the exact sequence
$\omega q'$ is equivalent to the exact sequence
$$\begin{CD}
0@>>> Y @>d>> Y'\oplus X @>m>> X' @>>> 0
\end{CD}$$
where $d(y)= (-\alpha y, i'y)$ and $m(y',x) = y' + Tx$.
\item Given a
commutative diagram like (\ref{pb}) the exact sequence
$i'\omega $ is equivalent to the exact sequence
$$\begin{CD}
0@>>> X' @>J>> X\oplus Z' @>Q>> Z @>>> 0
\end{CD}$$
where $J(x')= (Tx' , q'x')$ and $Q(x, z') = qx - \gamma z'$.
\end{enumerate}
\end{lemma}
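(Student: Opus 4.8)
The statement packages two ``diagonal'' constructions, and in each case the assertion is that an explicitly given exact sequence is equivalent to the sequence induced by a composition of $\omega$ with one of the structural maps. Since the correspondence recalled in the preliminaries says that an exact sequence is determined up to equivalence by any associated $z$-linear map, my plan is, for each diagonal sequence, to (a) verify that it is genuinely exact, and (b) exhibit the advertised composite ($\omega q'$ for part (1), $i'\omega$ for part (2)) as a $z$-linear map associated to it; step (b) then yields the claimed equivalence at once. Throughout I identify $Y$ with $i(Y)\subseteq X$ and $Y'$ with $i'(Y')\subseteq X'$, so that $i,i'$ become inclusions; commutativity of the left square of (\ref{po}) then reads $Ti=i'\alpha$, i.e.\ $T$ restricts to $\alpha$ on $Y$, while the left square of (\ref{pb}) reads $Ti'=i$, i.e.\ $T$ restricts to the inclusion on $Y'$.

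For part (1) I would first check exactness of $0\to Y\to Y'\oplus X\to X'\to 0$. Injectivity of $d$ is clear and $m\circ d=0$ is exactly $Ti=i'\alpha$. For $\ker m\subseteq\operatorname{im}d$: if $i'y'+Tx=0$, then applying $q'$ and using $q'i'=0$, $q'T=q$ gives $qx=0$, so $x\in Y$, whence $Tx=i'(\alpha x)$ and injectivity of $i'$ forces $(y',x)=(-\alpha x,x)=d(x)$. Surjectivity of $m$ comes for free from the bounded selection built next. To identify the $z$-linear map, start from a bounded homogeneous selection $b_Z$ and a linear selection $\ell_Z$ for $q$ with $\omega=b_Z-\ell_Z$, and lift through $m$: given $x'$, set $x=b_Z(q'x')$, note $q'(x'-Tx)=q'x'-qx=0$ so $x'-Tx\in i'(Y')$, and define $b(x')=\bigl((i')^{-1}(x'-Tx),\,x\bigr)$, with $\ell$ defined analogously from $\ell_Z$. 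A short computation then gives $b(x')-\ell(x')=\bigl(-\alpha\,\omega q'x',\,\omega q'x'\bigr)=d(\omega q'x')$, so $\omega q'$ is the $z$-linear map associated to the diagonal sequence, as required. (This also confirms the sign convention and shows that the second coordinate of $d$ is $iy$.)

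Part (2) is dual and slightly simpler. I would check exactness of $0\to X'\to X\oplus Z'\to Z\to 0$ using $qT=\gamma q'$ for $Q\circ J=0$; for $\ker Q\subseteq\operatorname{im}J$, given $(x,z')$ with $qx=\gamma z'$, I pick $x_0'$ with $q'x_0'=z'$ (surjectivity of $q'$), observe $q(x-Tx_0')=0$ so $y:=x-Tx_0'\in Y$, and take $x'=x_0'+i'y$, which satisfies $q'x'=z'$ and $Tx'=x$ by $Ti'=i$. Here the selections for $Q$ are the obvious ones $B(z)=(b_Zz,0)$ and $L(z)=(\ell_Zz,0)$, so $B(z)-L(z)=(\omega z,0)$; since $J(i'\omega z)=(Ti'\omega z,\,q'i'\omega z)=(\omega z,0)$, the associated $z$-linear map is $i'\omega$.

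The exactness verifications and the boundedness/linearity of the selections are routine. The only genuine point of care---what I expect to be the real content---is the identification-and-sign bookkeeping: one must arrange that the difference $b-\ell$ (resp.\ $B-L$) lands \emph{exactly} on the image of the embedding $d$ (resp.\ $J$) and reproduces $\omega q'$ (resp.\ $i'\omega$) on the nose, not merely up to a bounded plus linear perturbation. It is precisely the commutativity relations $Ti=i'\alpha$ and $Ti'=i$ that make this work and that pin down the signs appearing in the definitions of $d$, $m$, $J$ and $Q$.
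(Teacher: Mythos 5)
Your proof is correct, but it takes a genuinely different route from the paper's. The paper disposes of both parts in two lines by exhibiting auxiliary commutative diagrams
$$
\begin{CD}
0 @>>>Y@>i>> X @>q>>Z @>>>0 \equiv \omega\\
&&@| @AA{\tau}A @AAq'A\\
0 @>>>Y@>>d>Y' \oplus X @>>{m}> X' @>>>0
\end{CD}
\qquad\text{and}\qquad
\begin{CD}
0 @>>>Y@>i>> X @>q>>Z @>>>0 \equiv \omega\\
&&@Vi'VV @VV{\tau}V @|\\
0@>>> X' @>>J>X \oplus Z' @>>{Q}> Z @>>>0
\end{CD}
$$
with $\tau(y',x)=x$, resp.\ $\tau(x)=(x,0)$, and then simply invoking Lemmas \ref{pblemma} and \ref{polemma}, which already contain all the work of identifying the associated $z$-linear map. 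You instead bypass those lemmas entirely and argue from first principles: you verify exactness of the diagonal sequences and then build explicit bounded-homogeneous and linear selections for $m$ and $Q$ whose difference lands on $d(\omega q'x')$, resp.\ $J(i'\omega z)$, which by the very definition of ``associated $z$-linear map'' gives the claim. Your computations check out (in part (1), $b(x')-\ell(x')=\bigl((i')^{-1}(-T\omega q'x'),\,\omega q'x'\bigr)=(-\alpha\omega q'x',\,i\omega q'x')=d(\omega q'x')$ using $Ti=i'\alpha$; in part (2), $J(i'\omega z)=(\omega z,0)=B(z)-L(z)$ using $Ti'=i$). What the paper's route buys is economy and conceptual clarity --- the diagonal sequences are recognized as instances of the pullback/pushout identifications already established --- at the cost of leaving the exactness of the diagonal sequences implicit. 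What your route buys is a self-contained argument with explicit selections (hence explicit constants), an honest verification of exactness, and, as a bonus, it detects that the second coordinate of $d$ in the statement should read $iy$ rather than $i'y$.
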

\begin{proof} To prove (1) observe that there is a commutative diagram
$$
\begin{CD}
0 @>>>Y@>i>> X @>q>>Z @>>>0 \equiv \omega\\
&&@| @AA{\tau}A @AAq'A\\
0 @>>>Y@>>d>Y' \oplus X @>>{m}> X' @>>>0
\end{CD}$$where $\tau (y', x) = x$
and apply Lemma \ref{pblemma}. To prove (2) observe that there is
a commutative diagram
$$
\begin{CD}
0 @>>>Y@>i>> X @>q>>Z @>>>0 \equiv \omega\\
&&@Vi'VV @VV{\tau}V @|\\
0@>>> X' @>>J>X \oplus Z' @>>{Q}> Z @>>>0
\end{CD}$$where $\tau (x) = (x,0)$
and apply Lemma \ref{polemma}. \end{proof}

Following \cite{kaltloc}, an exact sequence $0 \to Y \to X \to Z
\to 0$ is said to locally split if its dual sequence $0 \to Z^*
\to X^* \to Y^* \to 0$ splits.

\begin{lemma} An exact sequence \seq $\equiv \omega$ (locally)
splits if and only if for every operator $\alpha:Y\to Y$ and $\gamma: Z'\to Z$  the sequence $\alpha \omega \gamma$ (locally)
splits.
\end{lemma}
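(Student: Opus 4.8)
The plan is to separate the two implications. The reverse implication is immediate: specializing $\alpha=\mathrm{id}_Y$ and $\gamma=\mathrm{id}_Z$ gives $\alpha\omega\gamma=\omega$, so if every $\alpha\omega\gamma$ (locally) splits then in particular $\omega$ does. All the content is therefore in the forward implication, and the first step is to recognize what kind of sequence $\alpha\omega\gamma$ induces. By Lemma \ref{polemma} the $z$-linear map $\alpha\omega$ is associated to the pushout of \seq along $\alpha$, and by Lemma \ref{pblemma} the $z$-linear map $\omega\gamma$ is associated to the pullback of \seq along $\gamma$. Reading the composition as $\alpha(\omega\gamma)$, the map $\alpha\omega\gamma$ is thus associated to the sequence obtained from \seq by first forming the pullback along $\gamma:Z'\to Z$ and then the pushout along $\alpha:Y\to Y$. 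It therefore suffices to prove that both the pushout and the pullback of a (locally) split sequence are again (locally) split.

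For ordinary splitting this is transparent at the level of $z$-linear maps. If \seq$\,\equiv\omega$ splits then $\omega\equiv 0$, that is $\omega=B+L$ with $B:Z\to Y$ a bounded homogeneous map and $L:Z\to Y$ linear. Then
$$\alpha\omega\gamma=\alpha B\gamma+\alpha L\gamma,$$
where $\alpha B\gamma$ is again bounded and homogeneous while $\alpha L\gamma$ is again linear, whence $\alpha\omega\gamma\equiv 0$ and the induced sequence splits. This handles the forward implication in the splitting case with no appeal to the diagrams beyond the identification made in the first paragraph.

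For local splitting I would pass to duals. The sequence \seq locally splits precisely when its dual $0\to Z^*\to X^*\to Y^*\to 0$ splits. Dualizing interchanges the two constructions of the first paragraph: the dual of a pushout along $\alpha$ is the pullback of the dual along $\alpha^*$, and the dual of a pullback along $\gamma$ is the pushout of the dual along $\gamma^*$. Hence the dual of the sequence associated to $\alpha\omega\gamma$ arises from the already split dual of \seq by one pullback and one pushout, and by the splitting case just settled it again splits; therefore $\alpha\omega\gamma$ locally splits. The step I expect to be the main obstacle is exactly this commutation of duality with the two constructions: one must verify, from the explicit descriptions in Lemmas \ref{polemma} and \ref{pblemma} (and the accompanying diagrams for $\alpha\omega$ and $\omega\gamma$), that dualizing the pushout diagram reproduces precisely the pullback diagram of the dual sequence along $\alpha^*$, and symmetrically for $\gamma$, so that the reduction to the splitting case is legitimate.
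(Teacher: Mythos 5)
Your proposal is correct. The reverse implication and the splitting half of the forward implication coincide exactly with the paper's argument (decompose $\omega=B+L$ and observe that $\alpha B\gamma$ is bounded homogeneous and $\alpha L\gamma$ is linear). Where you genuinely diverge is the local-splitting half. The paper dualizes \emph{twice}: it passes to the bidual diagram, which has the same shape as the original (pushouts stay "pushout-shaped" and pullbacks stay "pullback-shaped" since the bidual functor preserves arrow directions), identifies the bidual of $\alpha\omega\gamma$ with $\alpha^{**}\Omega\gamma^{**}$, and then invokes Kalton's theorem that a sequence locally splits if and only if its bidual sequence splits. You instead dualize \emph{once} and work directly from the definition of local splitting (the dual sequence splits), at the price of having to check that dualization interchanges the two constructions, turning the pushout along $\alpha$ into the pullback along $\alpha^*$ and the pullback along $\gamma$ into the pushout along $\gamma^*$. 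The step you flag as the main obstacle is in fact routine with the tools already in the paper: dualizing a commutative diagram of shape (\ref{po}) produces one of shape (\ref{pb}) for the dual sequences (with $\alpha^*$ in place of $\gamma$), and Lemma \ref{pblemma} then identifies the dual of the pushout sequence as $\omega^*\alpha^*$; symmetrically, dualizing a diagram of shape (\ref{pb}) produces one of shape (\ref{po}) and Lemma \ref{polemma} gives $\gamma^*\omega^*$. What each approach buys: yours is more economical in that it avoids Kalton's bidual characterization entirely and uses only the definition plus the already-settled splitting case; the paper's bidual route avoids the pushout/pullback interchange bookkeeping because the bidual diagram is literally the same picture with double stars, so the same two lemmas apply in the same positions. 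Both reductions to the splitting case are legitimate, and your version is, if anything, the more self-contained of the two.
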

\begin{proof} The sufficiency is obvious in both cases. To prove the necessity is simple for the splitting: if $\omega =B +L$ with $B$ homogeneous bounded and $L$ linear then $\alpha \omega \gamma = \alpha B\gamma + \alpha L \gamma$ with
$\alpha B \gamma$ homogeneous bounded and $\alpha L \gamma$ linear. The case of local spliting follows from this and the observation that if one has the commutative diagram

$$
\begin{CD}
0 @>>>Y@>>> X @>>>Z @>>>0 \equiv \omega\\
&&@V{\alpha}VV @VVV @|\\
0 @>>>Y'@>>> E @>>> Z @>>>0\equiv \alpha \omega\\
&&@| @AAA @AA{\gamma}A \\
0 @>>>Y'@>>> X' @>>> Z' @>>>0 \equiv \alpha \omega \gamma
\end{CD}$$
then the biduals form also a commutative diagram
$$
\begin{CD}
0 @>>>Y^{**}@>>> X^{**} @>>>Z^{**} @>>>0 \equiv \Omega\\
&&@V{\alpha^{**}}VV @VVV @|\\
0 @>>>Y'^{**}@>>> E^{**} @>>> Z^{**} @>>>0\equiv \alpha^{**} \Omega\\
&&@| @AAA @AA{\gamma^{**}}A \\
0 @>>>Y'^{**}@>>> X'^{**} @>>> Z'^{**} @>>>0 \equiv \alpha^{**} \Omega\gamma^{**}.
\end{CD}$$
Kalton shows in \cite[Thm. 3.5]{kaltloc} that an exact sequence locally splits if and only if its bidual
sequence splits. Thus, since $\Omega$ splits, so does $\alpha^{**} \Omega\gamma^{**}$.
\end{proof}

\adef A Banach space $X$ has the $\lambda$-BAP if for each finite
dimensional subspace $F\subset X$ there is a finite rank operator
$T:X\to X$ such that $\|T\|\leq \lambda$ and $T(f)=f$ for each
$f\in F$.\zdef

It is well known that in a locally splitting sequence \seq one
has: i) if $Y,Z$ have the BAP then also $X$ has the BAP
\cite{god}; ii) if $X$ has the BAP then $Y$ has the BAP \cite[Thm.
5.1]{kaltloc}.

\section{Results for $\mathcal L_1$-spaces}

We assume in what follows that $\mathcal L_1$ denotes an arbitrary
$\mathcal L_1$-space. In \cite{lusk1,lusk2} Lusky shows that when
$X$ is separable and has the BAP then the kernel of every quotient
map $\ell_1\to X$ has the BAP. Theorem 2.1 (b) of \cite{fjp}
asserts that if $q: \mathcal L_1 \to X$ is a quotient map and
$X^*$ has the BAP then both $\ker q$ and $(\ker q)^*$ have the
BAP. It is well-known that when $X^*$ has the BAP then also $X$
has the BAP, but the converse fails since there exist spaces with
basis whose dual do not have AP \cite[1.e.7(b)]{lindtzaf}.
Therefore, the missing case is to show that the kernel of an
arbitrary quotient map  $\mathcal L_1 \to X$ has the BAP when $X$
has the BAP.

\begin{lema}\label{eleuno} Let $X$ be a Banach space with the $\lambda$-BAP. Then $co_z(X)$ has the $(3\lambda + \varepsilon)$-BAP.
\end{lema}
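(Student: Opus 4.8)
The plan is to exploit the universal property of $co_z(X)$ given in Proposition~\ref{initial}, which provides the canonical $z$-linear map $\Omega_X : X \curvearrowright co_z(X)$. The key idea is that to build a finite rank operator on $co_z(X)$ fixing a given finite dimensional subspace, I want to manufacture, out of a finite rank operator $T$ on $X$ witnessing the $\lambda$-BAP of $X$, an operator on $co_z(X)$ of controlled norm.

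Let me describe the steps. First I would take a finite dimensional subspace $G \subset co_z(X)$. Since $co_z(X)$ is the closed linear span of the evaluation functionals $\delta_x$, I can approximate $G$ (up to $\varepsilon$) by a finite dimensional subspace spanned by finitely many $\delta_{x_1}, \dots, \delta_{x_n}$, with the $x_i$ lying in some finite dimensional $F \subset X$. Next I would apply the $\lambda$-BAP of $X$ to produce a finite rank $T : X \to X$ with $\|T\| \le \lambda$ and $T|_F = \mathrm{id}$. The crucial move is then to push this through the universal property: the composition $\Omega_X T : X \curvearrowright co_z(X)$ is a $z$-linear map, so Proposition~\ref{initial} yields an operator $\phi_{\Omega_X T} : co_z(X) \to co_z(X)$ with $\phi_{\Omega_X T}\,\Omega_X = \nabla(\Omega_X T)$ and $\|\phi_{\Omega_X T}\| = Z(\Omega_X T) \le \|T\|\, Z(\Omega_X)$. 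I would also need to compare this operator with the identity on the relevant evaluation functionals: since $T$ fixes $F$, the values $\Omega_X T x_i = \Omega_X x_i$ agree on the chosen points, which should translate into $\phi_{\Omega_X T}$ acting as the identity on the $\delta_{x_i}$, up to the correction coming from $\nabla$ versus $\Omega_X$ itself.

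The image of $\phi_{\Omega_X T}$ need not be finite dimensional, since even though $T$ has finite rank, $\Omega_X$ sends a finite dimensional space into an infinite dimensional one (as the example with $B(e^{i\theta}) = x^\theta$ illustrates). This is exactly where Lemma~\ref{fin} enters and is, I expect, the main obstacle to control. I would replace $\Omega_X T$ by its finite-image modification $(\Omega_X T)_F$ supplied by Lemma~\ref{fin}, applied to the finite dimensional space $T(X)$ and the points $x_1, \dots, x_n$, so that the modified map agrees with $\Omega_X T$ on the $x_i$, has $z$-linearity constant at most $(3+\varepsilon)Z(\Omega_X T)$, and sends $T(X)$ into a finite dimensional space. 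Feeding \emph{this} map into Proposition~\ref{initial} then produces a \emph{finite rank} operator on $co_z(X)$.

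Assembling the estimates, the resulting operator $\phi$ has norm at most $Z\big((\Omega_X T)_F\big) \le (3+\varepsilon)\,\|T\|\, Z(\Omega_X) \le (3+\varepsilon)\lambda$, since $Z(\Omega_X)$ can be normalized to $1$; a careful bookkeeping of the $\varepsilon$'s gives the stated $(3\lambda + \varepsilon)$-BAP. The delicate points I anticipate are: (i) checking that $\phi$ genuinely fixes the subspace $G$ up to $\varepsilon$, which requires tracking how $\phi\,\Omega_X = \nabla\big((\Omega_X T)_F\big)$ evaluates on $\delta_x$ for $x \in F$ and reconciling the canonical form $\nabla$ with the raw map; and (ii) ensuring the density/approximation argument lets me pass from the specific evaluation functionals $\delta_{x_i}$ to an arbitrary finite dimensional $G \subset co_z(X)$ while absorbing the errors into $\varepsilon$. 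Once these are handled, finite rank, the norm bound, and the fixing property together establish the $(3\lambda+\varepsilon)$-BAP of $co_z(X)$.
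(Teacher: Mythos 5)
Your strategy is the same as the paper's (use the $\lambda$-BAP operator $T$ on $X$, repair the infinite-dimensionality of the image via Lemma~\ref{fin}, and push the result through the universal property of Proposition~\ref{initial} to get a finite rank $\phi$ on $co_z(X)$), but the step you yourself flag as the main obstacle is carried out in the wrong order, and as written it fails. You apply Lemma~\ref{fin} to the composition $\Omega_X T$ with the finite dimensional subspace $T(X)$ of the domain. Lemma~\ref{fin} only modifies the map \emph{on} the chosen finite dimensional subspace and leaves it untouched elsewhere: for $p\notin T(X)$ one still has $(\Omega_X T)_F(p)=\Omega_X(Tp)$, and as $p$ ranges over $X$ the points $Tp$ sweep out essentially all of $T(X)$, whose image under the unmodified $\Omega_X$ spans an infinite dimensional space in general. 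Hence $\nabla\bigl((\Omega_X T)_F\bigr)$ does not have finite dimensional range, and the operator $\phi$ it induces via Proposition~\ref{initial} is \emph{not} finite rank. The correct order, which is what the paper does, is to first apply Lemma~\ref{fin} to $\Omega_X$ itself with the subspace $T(X)$ (obtaining $\Omega_{T(X)}$, which sends $T(X)$ into a finite dimensional space and agrees with $\Omega_X$ at the $x_i$ and at the relevant Hamel basis vectors), and only then compose on the right with $T$: every value of $\Omega_{T(X)}\circ T$ lies in $\Omega_{T(X)}(T(X))$, so $\nabla(\Omega_{T(X)}T)$ genuinely has finite dimensional span and $\phi$ has finite rank. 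The norm bookkeeping is the same either way, $Z(\Omega_{T(X)}T)\le Z(\Omega_{T(X)})\|T\|\le(3+\varepsilon)Z(\Omega_X)\lambda$.

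Two smaller points you leave open are resolved in the paper by specific choices you should make explicit. First, the reconciliation of $\nabla$ with the raw map: choose $T$ to fix the whole span $[e_\gamma:\gamma\in F]$ of the Hamel basis vectors supporting $x_1,\dots,x_m$ (not merely $[x_1,\dots,x_m]$), and arrange the Lemma~\ref{fin} modification to agree with $\Omega_X$ at those $e_\gamma$; since $\Omega_X e_\gamma=0$, the correction terms in $\nabla(\Omega_{T(X)}T)(x_k)$ vanish and $\phi$ fixes the $\Omega_X x_k$ \emph{exactly}, with no $\varepsilon$-perturbation needed at that stage. Second, your reduction from an arbitrary finite dimensional $G\subset co_z(X)$ to one spanned by finitely many $\Omega_X x_i$ is the same ``without loss of generality'' step the paper takes, and is handled by the standard perturbation argument for almost-fixing finite dimensional subspaces at the cost of absorbing an $\varepsilon$ into the constant.
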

\begin{proof} Fix a Hamel basis $(e_\gamma)_{\gamma\in \Gamma}$ for $X$ and let
$\Omega_X: X\lop co_z(X)$ the universal map appearing in
Proposition \ref{initial} verifying $\Omega_X e_\gamma=0$ for all
$\gamma\in \Gamma$. Let $\mathfrak F$ be a finite dimensional
subspace of $co_z(X)$. We can assume without loss of generality
that $\mathfrak F \subset [\Omega_X x_1, \dots, \Omega_X x_m]$.
Take $[x_1, \dots, x_m]\subset X$ and let $F$ be a finite set of
$\gamma$ for which $[x_1, \dots, x_m] \subset [e_{\gamma} : \gamma
\in F]$. Let $B_F: X\to X$ be a finite rank operator fixing
$[e_\gamma : \gamma \in F]$. Let $\Omega_{B_F(X)}$ be the version
of $\Omega_X$ verifying that the image of $B_F(X)$ is
finite-dimensional, which has moreover been done so that
$\Omega_{B_FX} x_k = \Omega_X x_k$ and $\Omega_{B_FX} e_\gamma =
\Omega_X e_\gamma$ for all $1\leq k\leq m$ and all $\gamma \in F$.
By the properties  of $co_z(X)$ there is an operator $\phi_F:
co_z(X)\to co_z(X)$ such that

$$\phi_F\Omega_X = \nabla (\Omega_{B_F(X)}B_F).$$

Given any $p\in X$, if $p=\sum_j \lambda_j e_j$ then
$$
\nabla (\Omega_{B_F(X)}B_F)(p) = \Omega_{B_F(X)}B_Fp - \sum_j
\lambda_j \Omega_{B_F(X)}B_F e_j$$ and therefore the image of
$\nabla (\Omega_{B_F(X)}B_F)$ spans a finite dimensional space.
This means that also the range of $\phi_F$ is finite dimensional.
Moreover, $\phi_F$ fixes $\mathfrak F$ since for $\Omega_X x_k,
1\leq k\leq m$ one has that if $x_k = \sum \lambda_i e_{\gamma_i}$
with $\gamma_i \in F$ one has

\begin{eqnarray*} \phi_F(\Omega_X x_k) &=& \nabla (\Omega_{B_F(X)} B_F) (x_k)\\
&=& \Omega_{B_F(X)} B_F(x_k) - \sum \lambda_i \Omega_{B_F(X)} B_F
e_{\gamma_i}\\ &=& \Omega_{B_F(X)} x_k - \sum \lambda_i
\Omega_{B_F(X)} e_{\gamma_i}\\
&=&\Omega_X x_k
\end{eqnarray*}

Finally, $$\|\phi_F\| = Z(\nabla (\Omega_{B_F(X)} B_F) =
Z(\Omega_{B_F(X)} B_F) \leq Z(\Omega_{B_F(X)})\| B_F\| \leq
(3+\varepsilon)Z(\Omega_X)\lambda.$$
\end{proof}

\begin{teor}\label{alluno} Let $0 \to Y \to \mathcal L_1\to X\to 0$ be an exact
sequence in which $X$ has the BAP. Then $Y$ has the BAP.
\end{teor}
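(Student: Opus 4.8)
The plan is to factor the given sequence through the universal extension attached to $\Omega_X$, to rotate the resulting square by means of Lemma~\ref{diagonales} so that the $\mathcal L_1$-space is turned into a \emph{quotient}, and then to exploit that twisted sums having an $\mathcal L_1$-space as quotient locally split. Local splitting, together with the fact that $co_z(X)$ has the BAP (Lemma~\ref{eleuno}), will then carry the BAP down to $Y$.

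First I would write the sequence as $0\to Y\to \mathcal L_1\to X\to 0\equiv\omega$ for a $z$-linear map $\omega:X\lop Y$ and, using Proposition~\ref{initial}, produce an operator $\phi_\omega:co_z(X)\to Y$ with $\phi_\omega\Omega_X=\nabla\omega\equiv\omega$. Pushing the universal sequence $0\to co_z(X)\to U_X\to X\to 0\equiv\Omega_X$ out along $\phi_\omega$ (Lemma~\ref{polemma}) gives a commutative diagram of type (\ref{po}) whose lower row is equivalent to the original sequence; hence its push-out space is isomorphic to $\mathcal L_1$. Feeding this diagram into Lemma~\ref{diagonales}(1) shows that the pullback $\Omega_X q'$, with $q':\mathcal L_1\to X$ the quotient map, is equivalent to
$$0\to co_z(X)\to Y\oplus U_X\to \mathcal L_1\to 0.$$

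The heart of the argument, and the step I expect to be the main obstacle, is that this last sequence locally splits. I would read this off from its quotient being an $\mathcal L_1$-space, as follows. Any $z$-linear map $\sigma:\mathcal L_1\lop A$ is uniformly trivial on finite dimensional subspaces: on an isometric copy of $\ell_1^n$ the inequality $\|\sigma(\sum a_ie_i)-\sum a_i\sigma(e_i)\|\le Z(\sigma)\sum|a_i|=Z(\sigma)\|\sum a_ie_i\|$ exhibits $\sigma$ as differing from a linear map by at most $Z(\sigma)$ on the unit ball, and every finite dimensional subspace of an $\mathcal L_{1,\lambda}$-space sits in a subspace $(\lambda+\varepsilon)$-isomorphic to some $\ell_1^n$. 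By the characterization of local splitting through such uniform local triviality (equivalently, passing to biduals and invoking Kalton's criterion that an exact sequence locally splits iff its bidual splits), the displayed sequence locally splits.

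Finally I would collect the ingredients. Since $X$ has the BAP, $co_z(X)$ has the BAP by Lemma~\ref{eleuno}, and $\mathcal L_1$-spaces have the BAP; as the displayed sequence locally splits, the first of the two standard facts recalled above about locally splitting sequences (both ends with the BAP force the middle to have it) yields that $Y\oplus U_X$ has the BAP. Being complemented in $Y\oplus U_X$, the space $Y$ inherits the BAP, as required.
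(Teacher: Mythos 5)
Your proof is correct and follows essentially the same route as the paper: push out the universal sequence $\Omega_X$ along $\phi_\omega$, apply Lemma~\ref{diagonales}(1) to obtain $0\to co_z(X)\to Y\oplus\Sigma\to\mathcal L_1\to 0$, observe that this sequence locally splits, and combine Lemma~\ref{eleuno} with the three-space behaviour of the BAP in locally split sequences to conclude that $Y$, being complemented in the middle space, has the BAP. The only cosmetic difference is your justification of the local splitting via uniform triviality on $\ell_1^n$-subspaces and Kalton's bidual criterion; the paper gets it in one line by noting that the dual sequence splits because the dual of an $\mathcal L_1$-space is injective.
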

\begin{proof} The universal property of the $co_z(\cdot)$  construction mentioned in
Proposition \ref{initial} yields a commutative diagram
$$\begin{CD}
0@>>> co_z(X) @>>>  \Sigma @>>> X @>>> 0 \equiv \Omega_X\\
 &&@V{\phi}VV @VVV @| \\
 0@>>> Y @>>>  \mathcal L_1 @>>p> X @>>> 0.
\end{CD}
$$Therefore, by virtue of Lemma \ref{diagonales} (1) there is an exact sequence
$$\begin{CD}
0@>>> co_z(X) @>>>  \Sigma \oplus Y @>>> \mathcal L_1
@>>> 0 \equiv \Omega_X p.
\end{CD}
$$This sequence locally splits: the dual sequence splits since the dual of an $\mathcal L_1$-space is injective. Thus, $\Sigma \oplus Y$ has the BAP, as well
as both $\Sigma$ and $Y$.
\end{proof}

\noindent \textbf{Question.} Does a similar result hold for other
well-known variations of the BAP such as the \emph{commuting
bounded approximation property} (CBAP), the \emph{uniform
approximation property} (UAP) or the existence of \emph{finite
dimensional decomposition} (FDD)? The previous proof cannot be
translated to cover the case of the CBAP or FDD since these
properties do not pass to complemented subspaces \cite{casa};
also, the result cannot be translated to the case of the UAP since
Lemma \ref{fin} spoils the estimate on the dimension of the
required finite dimensional operator.\\

The phenomenon described in the proposition --all the kernels of
all the quotient maps from an $\mathcal L_1$-space onto a space with the
BAP  have the BAP-- is part of a general stability result

\begin{prop}\label{eleunouno}  Given two exact sequences\begin{equation}\label{uno}\begin{CD}
0@>>> Y @>>>  \mathcal L_1 @>>> Z @>>> 0 \\
 &&&&&&\Vert \\
 0@>>> Y' @>>>  \mathcal L_1'   @>>> Z @>>> 0
 \end{CD}\end{equation}
Then $Y$ has the BAP if and only if $Y'$ has the BAP.
\end{prop}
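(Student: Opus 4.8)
The plan is to exploit the rigidity phenomenon already established in Theorem~\ref{alluno} together with the pullback/pushout machinery of Lemmas~\ref{polemma}--\ref{diagonales}. We have two quotient maps onto the \emph{same} space $Z$, with kernels $Y$ and $Y'$. The symmetry of the statement means it suffices to prove one direction, say that if $Y$ has the BAP then $Y'$ has the BAP. First I would form the \emph{pullback} of the two quotient maps $\mathcal L_1 \to Z$ and $\mathcal L_1' \to Z$, obtaining a space $\Pi = \{(a,a') \in \mathcal L_1 \oplus \mathcal L_1' : qa = q'a'\}$ that sits in two exact sequences simultaneously, with kernels $Y$ and $Y'$ respectively:
\begin{equation*}
\begin{CD}
0 @>>> Y' @>>> \Pi @>>> \mathcal L_1 @>>> 0, \qquad
0 @>>> Y @>>> \Pi @>>> \mathcal L_1' @>>> 0.
\end{CD}
\end{equation*}
These are the standard pullback sequences whose middle space is the same $\Pi$; the kernel of the projection $\Pi \to \mathcal L_1$ is precisely $Y'$ (the kernel of $q'$) and vice versa.

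The next step is to observe that \emph{both} of these sequences locally split. Indeed, each is an exact sequence whose quotient space is an $\mathcal L_1$-space, so its dual sequence has the form $0 \to (\mathcal L_1)^* \to \Pi^* \to (Y)^* \to 0$; since the dual of an $\mathcal L_1$-space is injective (an $\mathcal L_\infty$-space complemented in its bidual, hence injective after taking duals), the embedded $(\mathcal L_1)^*$ is complemented and the dual sequence splits. This is exactly the argument used in the proof of Theorem~\ref{alluno}, and it applies verbatim to each of the two pullback sequences because in both cases the quotient is an $\mathcal L_1$-space.

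From local splitting I would now run the two implications recorded at the end of Section~1. Suppose $Y$ has the BAP. The sequence $0 \to Y \to \Pi \to \mathcal L_1' \to 0$ locally splits, and both $Y$ and $\mathcal L_1'$ have the BAP ($\mathcal L_1$-spaces have the BAP, indeed even the metric AP), so by fact (i) the middle space $\Pi$ has the BAP. Then the other sequence $0 \to Y' \to \Pi \to \mathcal L_1 \to 0$ locally splits with middle space $\Pi$ now known to have the BAP, so by fact (ii) the kernel $Y'$ has the BAP. By symmetry the converse implication follows the same way, completing the proof.

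The step I expect to require the most care is the verification that both pullback sequences genuinely have an $\mathcal L_1$-space as quotient and that the local-splitting argument is symmetric. One must check that the kernel of $\Pi \to \mathcal L_1$ really is (isomorphic to) $Y'$ and not merely some auxiliary space; this is the familiar fact that in a pullback the projection onto one factor has kernel equal to the kernel of the \emph{other} map, and it is purely formal but worth stating explicitly. Everything else is a direct transcription of the injectivity-of-the-dual argument from Theorem~\ref{alluno} combined with the two stability facts (i) and (ii), so no genuinely new estimate is needed beyond what the excerpt already provides.
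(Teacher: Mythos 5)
Your proposal is correct and follows essentially the same route as the paper: the paper's proof also forms the pullback of the two quotient maps onto $Z$, observes that both resulting sequences $0\to Y\to E\to \mathcal L_1'\to 0$ and $0\to Y'\to E\to \mathcal L_1\to 0$ locally split because their quotients are $\mathcal L_1$-spaces, and then applies the two stability facts (i) and (ii) for locally split sequences in exactly the order you describe. The only cosmetic difference is that the paper packages the argument as a general diagram schema before specializing to $\mathcal L_1$-spaces, whereas you argue directly on the pullback.
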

\begin{proof} Observe that if one has a commutative diagram
formed by short exact sequences
 $$\begin{CD}
&& &&\omega'q&& \omega'\\
 && &&|\|&&|\|\\
  && &&0&&0 \\
  && &&@AAA @AAA \\
0@>>> Y @>>>  X @>q>> Z @>>> 0 \equiv \omega\\
 &&\Vert &&@AAA @AA{q'}A \\
 0@>>> Y @>>>  E @>>> X' @>>> 0 \equiv \omega q'\\
 &&&& @AAA @AAA \\
 &&&& Y'&=& Y' \\
   && &&@AAA @AAA \\
     && &&0&&0
\end{CD}
$$in which we assume that: \begin{enumerate} \item $\omega q'$ and $\omega'q$ locally
split. \item $X$ has the BAP \item $Y'$ has the
BAP.\end{enumerate} then also $Y$ has the BAP: Since $\omega'q$
locally split and both $X,Y'$ have the BAP, then $E$ has the BAP
\cite{god}, see also \cite[Thm. 7.3.e]{castgonz}. Since $\omega
q'$ locally splits, then $Y$ has the BAP.  Apply now this schema
to diagram (\ref{uno}) depicted in the form

$$\begin{CD}
&& &&\omega'q&& \omega'\\
 && &&|\|&&|\|\\
  && &&0&&0 \\
  && &&@AAA @AAA \\
0@>>> Y @>>>  \mathcal L_1 @>q>> Z @>>> 0 \equiv \omega\\
 &&\Vert &&@AAA @AA{q'}A \\
 0@>>> Y @>>>  E @>>> \mathcal L_1' @>>> 0 \equiv \omega q'\\
 &&&& @AAA @AAA \\
 &&&& Y'&=& Y' \\
   && &&@AAA @AAA \\
     && &&0&&0
\end{CD}
$$and recall that both $\omega'q$ and $\omega q'$ locally split since the quotient is an $\mathcal L_1$-space.\end{proof}

The following lemma, rather its dual version, will be required to
work with $\mathcal L_\infty$-spaces; we include its proof for the sake of
completeness.

\begin{lemma}\label{ne}  Given an exact sequence $0 \to Y \to X\to \mathcal L_1
\to 0$, in which $X$ has the BAP then the space $Y$ has the BAP.
\end{lemma}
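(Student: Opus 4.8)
### Proof Plan

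The plan is to exploit the duality between the $\mathcal L_1$ and $\mathcal L_\infty$ situations and reuse the local-splitting machinery already established. The statement to prove is: given an exact sequence $0 \to Y \to X \to \mathcal L_1 \to 0$ with $X$ having the BAP, then $Y$ has the BAP. The key structural fact I would invoke is that since the quotient $Z = \mathcal L_1$ is an $\mathcal L_1$-space, its dual is injective, and hence any exact sequence with quotient an $\mathcal L_1$-space locally splits (this is exactly the mechanism used in the proof of Theorem \ref{alluno} and in Proposition \ref{eleunouno}). Thus our sequence $0 \to Y \to X \to \mathcal L_1 \to 0$ locally splits.

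Once local splitting is in hand, the conclusion follows directly from the general principle recorded at the end of Section 1: in a locally splitting sequence \seq, if $X$ has the BAP then $Y$ has the BAP (this is the implication (ii) attributed to \cite[Thm. 5.1]{kaltloc}). First I would observe that the sequence locally splits because the dual sequence $0 \to (\mathcal L_1)^* \to X^* \to Y^* \to 0$ splits, since $(\mathcal L_1)^*$ is an $\mathcal L_\infty$-space and therefore injective (equivalently, a $\mathcal P_\lambda$-space), so the identity on $(\mathcal L_1)^*$ extends to a retraction $X^* \to (\mathcal L_1)^*$. Then I would apply the cited result of Kalton to transfer the BAP from $X$ down to the kernel $Y$.

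The situation here is genuinely simpler than Theorem \ref{alluno}, because there $Y$ sat as the \emph{kernel} of a quotient from an $\mathcal L_1$-space and one had to build $co_z(X)$ and perform the diagonal-pushout construction of Lemma \ref{diagonales}; here $\mathcal L_1$ appears as the \emph{quotient}, so the injectivity of its dual gives local splitting immediately with no auxiliary construction. I expect the only point requiring a word of justification to be the injectivity of $(\mathcal L_1)^*$: one should recall that the dual of an $\mathcal L_1$-space is an $\mathcal L_\infty$-space and that $\mathcal L_\infty$-spaces are complemented in their biduals and, being isomorphic to injective spaces up to the $\mathcal L_\infty$ constant, admit the required extension of the identity map. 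With that, the proof is a two-line application of the established transfer principle for locally splitting sequences.

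The main obstacle, such as it is, is simply to state cleanly why the dual sequence splits rather than to overcome any real difficulty; the heavy lifting has already been done by the general BAP-transfer result for locally split sequences. Accordingly the argument reduces to: local splitting from injectivity of the dual, followed by the descent of the BAP from the middle space to the subspace.
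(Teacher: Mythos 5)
Your proof is correct, but it is genuinely different from (and shorter than) the one in the paper. You apply the two facts already on record directly to the given sequence: it locally splits because $(\mathcal L_1)^*$ is injective, and then Kalton's result \cite[Thm. 5.1]{kaltloc} (item (ii) at the end of Section 1) passes the BAP from the middle space to the locally complemented subspace $Y$. The paper instead takes a detour: it lifts the quotient map onto $\mathcal L_1$ through a projective presentation $0\to K\to \ell_1(\Gamma)\to \mathcal L_1\to 0\equiv\omega$, forms via Lemma \ref{diagonales}(1) the diagonal sequence $0\to K\to Y\oplus\ell_1(\Gamma)\to X\to 0$, observes that it locally splits and that $K$ has the BAP (Theorem \ref{alluno}), and then invokes the Godefroy--Saphar three-space result (item (i), \cite{god}) to get the BAP for $Y\oplus\ell_1(\Gamma)$ and hence for $Y$. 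What the paper's longer route buys is that it dualizes verbatim to Lemma \ref{new}, where your direct argument is unavailable: in a locally splitting sequence the BAP does \emph{not} in general descend to the quotient (the paper's $X^{**}/X$ example), so for $0\to\mathcal L_\infty\to X\to Z\to 0$ one is forced into the diagonal-sequence argument; Lemma \ref{ne} is only stated, as the authors say, to set up that dual version. One small caution about your justification of local splitting: it is not true that $\mathcal L_\infty$-spaces are in general "isomorphic to injective spaces" ($c_0$ is an $\mathcal L_\infty$-space and is not injective); the fact you actually need, and which the paper itself uses in Theorem \ref{alluno}, is the Lindenstrauss--Rosenthal theorem that the dual of an $\mathcal L_1$-space is injective, so the dual sequence $0\to(\mathcal L_1)^*\to X^*\to Y^*\to 0$ splits. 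With that phrasing fixed, your two-line argument stands.
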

\begin{proof} General properties of $\ell_1(\Gamma)$ spaces yield a commutative diagram
$$\begin{CD}
0 @>>> K  @>>> \ell_1(\Gamma) @>>> \mathcal L_1 @>>>0 \equiv \omega\\
&&@VVV @VVV @|\\
0 @>>> Y @>>> X @>>q> \mathcal L_1  @>>>0.
\end{CD}
$$By Lemma \ref{diagonales} (1) the exact sequence associated to
$\omega q$ has the form $$\begin{CD} 0 @>>> K  @>>> Y \oplus \ell_1(\Gamma) @>>> X
@>>>0.
\end{CD}
$$
This sequence locally splits since $\omega$
locally splits. The space $K$ must have the BAP and, thus, if $X$ has the BAP then $Y
\oplus \ell_1(\Gamma)$ has the BAP, which implies that also $Y$ has
the BAP.
\end{proof}

\section{Dual results for $\mathcal L_\infty$-spaces and $z$-duals}

We assume in what follows that $\mathcal L_\infty$ denotes an
arbitrary $\mathcal L_\infty$-space. For the case of $\mathcal
L_\infty$-spaces the result (and proof) of Figiel, Johnson and Pe\l
czy\'nski \cite[Thm. 2.1.(a)] {fjp} is already optimal:

\begin{prop}\label{fjp} Let  $0\to Y \to \mathcal L_\infty \to Z \to 0$ be an exact sequence in which $Y$ has the BAP. Then $Z$ has the
BAP.\end{prop}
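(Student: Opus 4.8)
The plan is to dualize the strategy of Theorem \ref{alluno}, but with an essential extra difficulty that forces one into the Figiel--Johnson--Pe\l czy\'nski analysis. Writing the sequence as $\mathcal L_\infty = Y\oplus_\omega Z$ with associated $z$-linear map $\omega\colon Z\lop Y$ and quotient map $q(y,z)=z$, the goal is to produce, for each finite dimensional $F\subseteq Z$ and each $\varepsilon>0$, a finite rank operator $T\colon Z\to Z$ with $\|T\|\le\mu$ (for some $\mu$ depending only on $\lambda$, on $Z(\omega)$ and on the $\mathcal L_\infty$-constant) such that $T|_F=\mathrm{id}$. The natural temptation is to lift $F$ to $\mathcal L_\infty$, approximate the identity there by a finite rank operator coming from the local $\ell_\infty^n$-structure, and push the result back down to $Z$; but a finite rank $P\colon\mathcal L_\infty\to\mathcal L_\infty$ descends to $Z$ only when $P(Y)\subseteq Y$, and a bounded linear lift of $F$ with norm independent of $F$ need not exist. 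Both failures are measured by $\omega$, and, in contrast with the $\mathcal L_1$-quotient case, the sequence need not locally split, so no diagonal argument as in Lemma \ref{diagonales} is available.

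The first key step is to confine the obstruction to a finite dimensional subspace of $Y$. Applying Lemma \ref{fin} to $\omega$ along a suitable finite set of norm-one points of $F$, we may replace $\omega$ by an equivalent $z$-linear map $\omega_F$, with $Z(\omega_F)\le(3+\varepsilon)Z(\omega)$, whose image on $F$ spans a finite dimensional space $W\subseteq Y$; since $\omega_F\equiv\omega$ the induced sequence is unchanged up to equivalence. The second step uses the local structure of $\mathcal L_\infty$-spaces: enlarging $W$ together with a bounded homogeneous lift $b(F)$ of $F$ to a finite dimensional subspace that is $(1+\varepsilon)\kappa$-isomorphic to some $\ell_\infty^m$ (here $\kappa$ is the $\mathcal L_\infty$-constant) and exploiting that $\ell_\infty^m$ is $1$-injective, we obtain a finite rank $P\colon\mathcal L_\infty\to\mathcal L_\infty$, uniformly bounded, fixing both $W$ and $b(F)$. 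The third step invokes the hypothesis on $Y$: by the $\lambda$-BAP there is a finite rank $S\colon Y\to Y$ with $\|S\|\le\lambda$ fixing $W$. Gluing $S$ along $Y$ with $P$ along the $b(F)$-directions should produce a finite rank $\Phi\colon\mathcal L_\infty\to\mathcal L_\infty$ with $\Phi(Y)\subseteq Y$ and $q\Phi b=\mathrm{id}$ on $F$; this $\Phi$ then descends to the desired $T$, and $T|_F=\mathrm{id}$ together with $\mathrm{rank}\,T<\infty$ follow at once.

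The main obstacle is this gluing, namely obtaining $\Phi$ with a norm bound that is \emph{uniform in} $F$. The naive prescription ``$\mathrm{id}$ on a lift of $F$, $S$ on $Y$'' has norm controlled by the angle between that lift and $Y$, i.e.\ by the lifting constant of $F$, which is exactly the quantity that is not uniformly bounded. The purpose of confining $\omega_F(F)$ to the finite dimensional $W$ is that the entire discrepancy between the bounded selection $b$ and a linear lift of $F$ then lives in $W$, where it is absorbed by the finite rank operator $S$ supplied by the $\lambda$-BAP of $Y$; combined with the $\ell_\infty^m$-injectivity controlling the $Z$-directions, this is what should bound $\|\Phi\|$ in terms of $\lambda$, $\kappa$ and $Z(\omega)$ alone. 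Making this absorption quantitative is the technical heart of the argument, and is exactly where one genuinely uses that $Y$ -- rather than merely the middle space $\mathcal L_\infty$ -- has the bounded approximation property: the hypothesis cannot be dropped, since if $W\subseteq L_1$ lacks the approximation property then its dual $W^*=L_\infty/W^{\perp}$ is a quotient of an $\mathcal L_\infty$-space that also lacks it.
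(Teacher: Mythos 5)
First, note that the paper does not actually prove this proposition: it is quoted verbatim from Figiel, Johnson and Pe\l czy\'nski \cite[Thm. 2.1.(a)]{fjp}, and the text explicitly says that their result ``(and proof) is already optimal''. So there is no internal argument to compare yours against; the only question is whether your proposal stands on its own as a proof. It does not. What you have written is a correct diagnosis of the difficulty followed by an explicit admission that the decisive step is not carried out: you say that making the ``absorption'' quantitative ``is the technical heart of the argument'' and then stop. That heart is missing, and the heuristic you offer for why it should work does not survive inspection. After replacing $\omega$ by $\omega_F$ via Lemma \ref{fin}, the restriction of $\omega_F$ to the unit ball of $F$ is indeed bounded with finite-dimensional range $W$, but its bound is $\max_i\|\omega x_i\|$, a quantity controlled by nothing in sight (certainly not by $Z(\omega)$); consequently the linear lift $\ell|_F=b|_F-\omega_F|_F$ still has uncontrolled norm, and the angle between $Y$ and any lift of $F$ inside your $\ell_\infty^m$-like subspace $G$ remains uncontrolled. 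Your proposed fix is that the bad component lies in $W$ ``where it is absorbed by the finite rank operator $S$''; but $S$ \emph{fixes} $W$ (that is what the $\lambda$-BAP gives you), it does not annihilate it, so the naive glued operator ``$S$ on $Y$, identity on the lift'' still has norm governed by the projection of $G$ onto the lift along $G\cap Y$, i.e.\ exactly by the uncontrolled lifting constant. To kill the bad component you would need an operator vanishing on those directions, which is in tension with fixing $F$; resolving this tension is precisely the content of the Figiel--Johnson--Pe\l czy\'nski argument, and no amount of rearranging the ingredients you list produces it for free.

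A secondary point: the closing remark about necessity of the hypothesis (a subspace $W\subseteq L_1$ without AP gives $W^*=L_\infty/W^\perp$ without AP) is correct but proves nothing about the statement at hand. If your intention was to give a self-contained proof, the gluing step must be written out with explicit norm estimates; if your intention was to sketch why the result is plausible, the honest conclusion is the one the paper itself draws, namely to invoke \cite[Thm. 2.1.(a)]{fjp}. As it stands the proposal is an (accurate) description of the obstruction, not a proof.
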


Let us show that the dual result of Proposition \ref{alluno} for
$\mathcal L_\infty$-spaces also holds. We need a lemma, dual of Lemma \ref{ne}.

\begin{lemma}\label{new} Given an exact sequence $0 \to \mathcal L_\infty\to X\to
Z\to 0$ in which $X$ has the BAP then the space $Z$ has the BAP.
\end{lemma}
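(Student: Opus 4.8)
The plan is to dualise the argument of Lemma~\ref{ne}, so I would first try to run the proof ``verbatim'' with the roles of projective and injective objects interchanged. In Lemma~\ref{ne} the key was that every space is a \emph{quotient} of some $\ell_1(\Gamma)$; here the dual fact is that every Banach space (in particular $\mathcal L_\infty$) \emph{embeds} into some $\ell_\infty(\Gamma)$, or better, that $\mathcal L_\infty$-spaces admit a natural lifting/extension property through injective spaces. Concretely, since $\ell_\infty(\Gamma)$ is injective, I would aim to produce a commutative diagram
$$\begin{CD}
0 @>>> \mathcal L_\infty @>>> X @>q>> Z @>>> 0 \equiv \omega\\
&&@VVV @VVV @|\\
0 @>>> \ell_\infty(\Gamma) @>>> C @>>> Z @>>> 0 \equiv \alpha\omega
\end{CD}$$
obtained by pushing out along the embedding $\alpha:\mathcal L_\infty \to \ell_\infty(\Gamma)$, using Lemma~\ref{polemma} to identify the lower sequence with $\alpha\omega$.

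Next I would invoke Lemma~\ref{diagonales}~(1) applied to this push-out square. That lemma rewrites the ``diagonal'' data as an exact sequence whose middle term is a direct sum; in the present notation it should yield a sequence of the form
$$\begin{CD}
0 @>>> \mathcal L_\infty @>>> \ell_\infty(\Gamma)\oplus X @>>> C @>>> 0,
\end{CD}$$
and the crucial point is that this sequence \emph{locally splits}. Here I expect to use that the original sequence $0\to \mathcal L_\infty \to X \to Z \to 0$ locally splits automatically, because an $\mathcal L_\infty$-space is locally complemented in any superspace (equivalently, its bidual is complemented in the bidual of $X$, which is exactly Kalton's criterion quoted in the excerpt). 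Local splitting then propagates to the diagonal sequence just as in the proof of Lemma~\ref{ne}.

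With local splitting in hand the BAP bookkeeping is routine: $\ell_\infty(\Gamma)$ has the BAP, so if $X$ has the BAP then $\ell_\infty(\Gamma)\oplus X$ has the BAP, and since the diagonal sequence locally splits the outer space $C$ inherits the BAP by the principle recalled after Proposition~\ref{fjp} (local splitting plus BAP of the middle passes to the quotient, via \cite{fjp} / the $\mathcal L_\infty$ analogue of the three-space results). Finally I would feed $C$ back through the lower push-out sequence $0\to \ell_\infty(\Gamma)\to C\to Z\to 0$: this sequence also locally splits because its kernel $\ell_\infty(\Gamma)$ is injective, so it actually \emph{splits}, whence $Z$ is a complemented subspace of $C$ and therefore has the BAP.

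The step I expect to be the main obstacle is pinning down the correct ``dual'' injective object and the direction of the maps so that Lemma~\ref{diagonales} applies cleanly: Lemma~\ref{ne} used a surjection onto $\mathcal L_1$ and a projective resolution by $\ell_1(\Gamma)$, and the honest dual needs an \emph{extension} by an injective $\ell_\infty(\Gamma)$ together with the fact that $\mathcal L_\infty$-subspaces are locally complemented. Verifying that the relevant sequence locally splits (rather than merely that some dual sequence splits) is where the $\mathcal L_\infty$ hypothesis must be used essentially, and getting the diagram oriented so that $\alpha\omega$ and the diagonal sequence are the ones to which the cited BAP-transfer results apply is the delicate bookkeeping.
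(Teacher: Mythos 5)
Your overall strategy (resolve the $\mathcal L_\infty$ subspace through an injective $\ell_\infty(\Gamma)$ and use Lemma \ref{diagonales} to produce a locally split ``diagonal'' sequence) is the right family of ideas, but you have oriented the diagram the wrong way, and this creates a genuine gap at the decisive step. Pushing out along $\alpha:\mathcal L_\infty\to\ell_\infty(\Gamma)$ puts $C$ in the \emph{quotient} position of the diagonal sequence $0\to\mathcal L_\infty\to\ell_\infty(\Gamma)\oplus X\to C\to 0$, and the transfer principle you invoke --- ``local splitting plus BAP of the middle passes to the quotient'' --- is false. The only transfer results available (recalled after the definition of the $\lambda$-BAP) are: in a locally split sequence, BAP of the two outer spaces passes to the middle \cite{god}, and BAP of the middle passes to the \emph{subspace}. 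Indeed the paper records the counterexample immediately after this very lemma: $0\to X\to X^{**}\to X^{**}/X\to 0$ always locally splits and $X$, $X^{**}$ can both have a basis while $X^{**}/X$ fails the BAP. Nor can you fall back on Proposition \ref{fjp}, since the middle term $\ell_\infty(\Gamma)\oplus X$ of your diagonal sequence is not an $\mathcal L_\infty$-space. Worse, because $\ell_\infty(\Gamma)$ is injective your push-out sequence splits, so $C\simeq\ell_\infty(\Gamma)\oplus Z$ and ``$C$ has the BAP'' is literally equivalent to the statement you are trying to prove: the construction makes no progress.

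The correct orientation, which is what the paper does, is to use the injectivity of $\ell_\infty(\Gamma)$ to \emph{extend} the embedding $\mathcal L_\infty\to\ell_\infty(\Gamma)$ through $\jmath:\mathcal L_\infty\to X$ to an operator $X\to\ell_\infty(\Gamma)$; this yields a diagram of the pullback type (\ref{pb}) with top row $0\to\mathcal L_\infty\to\ell_\infty(\Gamma)\to\ell_\infty(\Gamma)/\mathcal L_\infty\to 0\equiv\omega$ and bottom row the given sequence. Lemma \ref{diagonales}(2) then produces $0\to X\to\ell_\infty(\Gamma)\oplus Z\to\ell_\infty(\Gamma)/\mathcal L_\infty\to 0$, in which $Z$ sits as a complemented subspace of the \emph{middle} term. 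This sequence locally splits (because $\omega$ does), its subspace $X$ has the BAP by hypothesis, and its quotient $\ell_\infty(\Gamma)/\mathcal L_\infty$ is an $\mathcal L_\infty$-space and hence has the BAP; the Godefroy--Saphar three-space theorem \cite{god} then gives the BAP for $\ell_\infty(\Gamma)\oplus Z$, whence for $Z$. So the fix is not bookkeeping: you must replace the push-out by the extension/pullback diagram so that the BAP flows into the middle term rather than out to the quotient.
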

\begin{proof}
By the injectivity properties of $\ell_\infty(\Gamma)$ spaces,
there is a commutative diagram
$$\begin{CD}
0 @>>> \mathcal L_\infty  @>>> \ell_\infty(\Gamma) @>>>
\ell_\infty(\Gamma)/\mathcal L_\infty
@>>>0 \equiv \omega \\
&&@| @AAA @AAA\\
0 @>>> \mathcal L_\infty @>>\jmath> X @>>> Z @>>>0.
\end{CD}
$$By Lemma \ref{diagonales} (2)  the exact sequence associated to $\jmath \omega$ has the form
$$\begin{CD}
0 @>>> X @>>> \ell_\infty(\Gamma)\oplus Z @>>>
\ell_\infty(\Gamma)/\mathcal L_\infty @>>>0.
\end{CD}
$$This sequence locally splits since $\omega$
locally splits. The space $\ell_\infty(\Gamma)/\mathcal L_\infty
$, as a quotient of two $\mathcal L_\infty$-spaces, is an
$\mathcal L_\infty$-space, hence it has the BAP. Thus, if $X$ has
the BAP then $\ell_\infty(\Gamma)\oplus Z$ has
the BAP, which implies that also $Z$ has the BAP.
\end{proof}

Recall that the fact that both $Y,X$ have the BAP in a locally
splitting sequence $\seq$ does not imply that $Z$ has the BAP:
indeed, every separable Banach space $Z$ is a quotient $X^{**}/X$
in which both $X, X^{**}$ have a basis \cite{lindjames}. Taking as
$Z$ a space without BAP provides the example since every sequence
$0 \to X\to X^{**}\to Z\to
0$ locally splits.\\

We are ready to show the dual of Proposition \ref{eleunouno}: if
for some $\mathcal L_\infty$-space the quotient $\mathcal
L_\infty/X$ has the BAP the same happens for all $\mathcal
L_\infty$-spaces.

\begin{prop}\label{eleinfinito}  Given two exact sequences\begin{equation}\label{infty}\begin{CD}
0@>>> Y @>>>  \mathcal L_\infty @>>> Z @>>> 0 \\
 &&\Vert \\
 0@>>> Y @>>>  \mathcal L_\infty'   @>>> Z' @>>> 0 \\
 \end{CD}
\end{equation}
Then $Z$ has the BAP if and only if $Z'$ has the BAP.
\end{prop}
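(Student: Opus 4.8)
The plan is to dualize the proof of Proposition \ref{eleunouno}. There the two sequences shared the quotient $Z$ and the argument rested on a pullback; here the two sequences share the subspace $Y$, so I would instead build the pushout of the two embeddings of $Y$ and read off the dual crossing diagram.

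Concretely, write the given sequences as $0\to Y\stackrel{i}{\to}\mathcal L_\infty\stackrel{q}{\to}Z\to 0\equiv\omega$ and $0\to Y\stackrel{i'}{\to}\mathcal L_\infty'\stackrel{q'}{\to}Z'\to 0\equiv\omega'$, with $\omega:Z\lop Y$ and $\omega':Z'\lop Y$. Forming the pushout $P$ of $i$ and $i'$ produces, by the pushout construction preceding Lemma \ref{polemma}, the two exact sequences
$$0\to \mathcal L_\infty'\to P\to Z\to 0\equiv i'\omega,\qquad 0\to \mathcal L_\infty\to P\to Z'\to 0\equiv i\omega',$$
the first being the pushout of $\omega$ along $i'$ and the second the pushout of $\omega'$ along $i$ (both are pushouts of the same pair $i,i'$, hence share the middle space $P$). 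The decisive structural feature is that in each of these sequences the subspace is an $\mathcal L_\infty$-space.

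I would then transfer the BAP across $P$. Assume $Z$ has the BAP. In the first sequence the subspace $\mathcal L_\infty'$ has the BAP (every $\mathcal L_\infty$-space does) and the quotient $Z$ has the BAP by hypothesis; moreover the sequence locally splits, because an $\mathcal L_\infty$-space is locally complemented in every superspace (equivalently its bidual is injective, so the bidual sequence splits). Hence \cite{god} gives that $P$ has the BAP. Now I feed the second sequence $0\to\mathcal L_\infty\to P\to Z'\to 0$ into Lemma \ref{new}: its subspace is an $\mathcal L_\infty$-space and its middle space $P$ has the BAP, so the quotient $Z'$ has the BAP. Interchanging the roles of the two sequences yields the converse implication, so $Z$ has the BAP if and only if $Z'$ does.

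I expect the only delicate point to be the passage from $P$ to the quotient $Z'$. In a locally splitting sequence the BAP descends from the middle to the subspace by \cite[Thm. 5.1]{kaltloc}, but it need not descend to the quotient; indeed the remark after Lemma \ref{new} exhibits a locally splitting sequence $0\to X\to X^{**}\to Z\to 0$ with $Z$ lacking the BAP. Thus this step cannot be carried out by generic local splitting and must exploit that the subspace of the second sequence is an $\mathcal L_\infty$-space, which is exactly the content of Lemma \ref{new}. This is why that lemma was isolated just before the statement, and it is the sole asymmetric ingredient of the argument.
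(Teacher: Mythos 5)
Your proof is correct and is essentially the paper's own argument: the authors form exactly the same pushout diagram, use local splitting of the column $i\omega'$ (subspace an $\mathcal L_\infty$-space) together with \cite{god} to give the pushout space the BAP, and then apply Lemma \ref{new} to the other sequence to descend to the quotient. The only cosmetic difference is that the paper starts from the hypothesis on $Z'$ and concludes for $Z$, while you argue in the opposite direction and invoke symmetry.
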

\begin{proof} Draw the sequences forming a commutative diagram
$$\begin{CD}
 &&\omega'&& i\omega'\\
 &&|\|&&|\|\\
 &&0&&0 \\
 &&@VVV @VVV \\
0@>>> Y @>i>>  \mathcal L_\infty @>>> Z @>>> 0 \equiv \omega\\
 &&@Vi'VV @VVV @| \\
 0@>>> \mathcal L_\infty' @>>>  \PO @>>> Z @>>> 0 \equiv i'\omega\\
&& @VVV @VVV \\
&& Z'&=& Z' \\
 &&@VVV @VVV \\
 &&0&&0;
\end{CD}
$$and assume that $Z'$ has the BAP. Since $i\omega'$ locally splits, $\PO$ must
have the BAP and thus Lemma \ref{new} applies to conclude that
also $Z$ has the BAP. \end{proof}

The following result can be considered the nonlinear  version  of
Proposition \ref{fjp}, and the dual of ``$X$ has the BAP implies
$co_z(X)$ has the BAP". It is somehow surprising since $X^*$
apparently disappears in the construction of $X^z$.

\begin{prop}\label{zdual} If $X^*$ has the BAP then also $X^z$  has
the BAP.
\end{prop}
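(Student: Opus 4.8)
The plan is to identify $X^z$ with the dual $co_z(X)^*$ and then exhibit $X^z$ as a complemented subspace of a space that inherits the BAP from $X^*$ through Proposition \ref{fjp}.

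First I would record the duality $X^z\cong co_z(X)^*$. This is immediate from Proposition \ref{initial}: the assignment $\omega\mapsto\phi_\omega$ sends a normalized $z$-linear map $\omega:X\lop\R$ (so $\nabla\omega=\omega$) to the functional $\phi_\omega\in co_z(X)^*$ determined by $\phi_\omega\Omega_X=\omega$, and since $\|\phi_\omega\|=Z(\omega)$ this is an isometry; it is onto because each $\psi\in co_z(X)^*$ yields $\psi\Omega_X\in X^z$ (using $\Omega_X e_\gamma=0$) with $\phi_{\psi\Omega_X}=\psi$. Thus it suffices to show $co_z(X)^*$ has the BAP.

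Next I would fix the canonical quotient $\rho:\ell_1(S_X)\to X$, $e_s\mapsto s$, put $K=\ker\rho$, and choose the homogeneous bounded selection $b(x)=\|x\|e_{x/\|x\|}$ together with a linear selection $\ell$ sending each norm-one Hamel basis vector $e_\gamma$ to the unit vector $e_{e_\gamma}$, so that $\omega=b-\ell:X\lop K$ is a $z$-linear map associated with $0\to K\to\ell_1(S_X)\to X\to 0$. Proposition \ref{initial} gives $\phi_\omega:co_z(X)\to K$ with $\phi_\omega\Omega_X=\nabla\omega$. The heart of the argument is to produce a bounded retraction $\psi:K\to co_z(X)$ with $\psi\phi_\omega=\mathrm{id}$. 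I would take $\psi$ to be the restriction to $K$ of the homogeneous map $\beta:\ell_1(S_X)\to co_z(X)$, $e_s\mapsto\delta_s=\Omega_X s$. Although $\beta$ need not be bounded on all of $\ell_1(S_X)$ (the norms $\|\delta_s\|$ depend on the Hamel expansion and are unbounded on the sphere), on $K$ there is decisive cancellation: for $u=\sum_s a_s e_s\in K$ one has $\sum_s a_s s=0$, so by the $z$-linearity of $\Omega_X$ (with $Z(\Omega_X)=1$)
$$\Big\|\sum_s a_s\delta_s\Big\|=\Big\|\sum_s a_s\Omega_X s-\Omega_X\big(\textstyle\sum_s a_s s\big)\Big\|\le\sum_s|a_s|,$$
whence $\|\psi\|\le1$. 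A direct computation then gives $\psi\phi_\omega=\mathrm{id}$: on the dense set of $\delta_x$ one has $\psi\phi_\omega(\delta_x)=\beta(\nabla\omega(x))$, and since $\beta b(x)=\delta_x$ while $\beta\ell$ is linear with $\beta\ell(e_\gamma)=\delta_{e_\gamma}=\Omega_X e_\gamma=0$, all correcting terms cancel and the value is $\delta_x$. Hence $co_z(X)$ is complemented in $K$ via the projection $\phi_\omega\psi$, and dualizing, $X^z\cong co_z(X)^*$ is complemented in $K^*$.

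Finally I would invoke the hypothesis. Dualizing $0\to K\to\ell_1(S_X)\to X\to 0$ yields the short exact sequence $0\to X^*\to\ell_\infty(S_X)\to K^*\to 0$, whose middle term is an $\mathcal L_\infty$-space; since $X^*$ has the BAP, Proposition \ref{fjp} shows $K^*$ has the BAP, and therefore its complemented subspace $X^z$ has the BAP. I expect the main obstacle to be the complementation step: one must verify both that the retraction $\psi$ is bounded---precisely where the kernel relation $\sum_s a_s s=0$ together with the $z$-linearity of $\Omega_X$ tames the otherwise unbounded evaluations $\delta_s$---and that $\psi\phi_\omega=\mathrm{id}$, which rests on $\Omega_X$ vanishing on the chosen Hamel basis.
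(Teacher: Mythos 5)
Your proof is correct, but it takes a genuinely different (and longer) route than the paper's. The paper identifies $X^z$ directly as a quotient of an $\mathcal L_\infty$-space by $X^*$: writing $\FUN Z(X,\R)=\FUN B(X,\R)+\FUN L(X,\R)$ with $\FUN B(X,\R)\cap\FUN L(X,\R)=X^*$, the diamond lemma together with the uniform boundedness principle for exact sequences gives $X^z\simeq \FUN B(X,\R)/X^*=\ell_\infty(S^+)/X^*$, and Proposition \ref{fjp} finishes in one stroke. You instead realize $X^z$ as $co_z(X)^*$, show that $co_z(X)$ embeds complementably into the kernel $K$ of the canonical quotient $\ell_1(S_X)\to X$ by constructing an explicit bounded retraction $\psi$ (the cancellation coming from $\sum_s a_s s=0$ together with $Z(\Omega_X)\le 1$ is exactly the right mechanism), and only then apply Proposition \ref{fjp} to $K^*=\ell_\infty(S_X)/X^*$ before passing to the complemented subspace $X^z$. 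Both arguments thus reduce to the same application of Proposition \ref{fjp} to a quotient of $\ell_\infty$ of the sphere by $X^*$; the paper's is shorter because the algebraic identification does all the work, while yours is more constructive and yields as a by-product that $co_z(X)$ is complemented in $\ker(\ell_1(S_X)\to X)$, which is of independent interest. Two details deserve a line each: (i) your estimate gives $\|\psi\|\le 1$ only on finitely supported elements of $K$, so you should note these are dense in $K$ (approximate $u\in K$ by a finitely supported $v$ and subtract $\|x\|e_{x/\|x\|}$ with $x=\rho v$ to land back in $K$); (ii) the identity $\Omega_X s=\delta_s$ relies on the concrete construction of the universal map in \cite{castmoresob} rather than on the bare statement of Proposition \ref{initial} --- alternatively, run the whole argument with the $z$-linear map $\delta:X\lop co_z(X)$ in place of $\Omega_X$, which satisfies the same universal property.
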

\begin{proof} Let $\FUN Z(X,\R)$ denote the space of $z$-linear maps, $\FUN B(X,\R)$ the space of bounded homogeneous maps,
$\FUN L(X,\R)$ the space of linear maps and, as usual, $\mathfrak
L(X,\R) =X^*$ is the space of linear continuous maps on $X$. Since
$\FUN Z(X,\R) = {\FUN B}(X,\R) + {\FUN L}(X,\R)$ and $\FUN
B(X,\R)\cap \FUN L (X,\R) = \mathfrak L(X,\R)$ the diamond lemma
applied to
$$\begin{CD} &\FUN Z(X,\R)\\ \nearrow&& \nwarrow\\ \FUN B(X,\R)&&&&
\FUN L(X,\R)\\\nwarrow& &\nearrow\\ &\mathfrak L(X,\R)\end {CD}$$
yields $\FUN Z(X,\R)/\FUN L(X,\R) = \FUN B(X,\R) /\mathfrak
L(X,\R)$. But observe that, algebraically speaking, $\FUN
Z(X,\R)/\FUN L(X,\R)$ is the space $Z_L(X,\R)$; moreover, the
uniform boundedness principle for exact sequences
\cite{cabecastuni} yields that they are also isomorphic as Banach
spaces. Hence
$$X^z = Z_L(X,\R)\simeq \FUN B(X,\R) / X^*.$$

In the same form as the space of bounded functions on $X$ is
$\ell_\infty(X)$, the space $\FUN B(X,\R)$ is $\ell_\infty(S^+)$,
where $S^+$ is a half of the unit sphere (i.e., a subset of the
unit sphere with the property that for every norm one $x$,either
$x$ or $-x$ is in $S^+$.) Therefore, Proposition \ref{fjp} and the
isomorphism
$$X^z \simeq \ell_\infty(S^+) / X^*$$
complete the proof.
\end{proof}

The referee has pointed out the question of whether ``$co_z(X)$
has the BAP implies $X$ has the BAP", as it occurs in the case of the
Lipschitz-free space of Godefroy and Kalton \cite{godkalt}. This
is a tough question: By the results in this paper, if $co_z(X)$
has the BAP then the kernel of every quotient map $q: \mathcal L_1 \to
X$ has the BAP. But there are Banach spaces $Z$ with the BAP which are
subspaces of some $\mathcal L_1$-space so that $\mathcal L_1/Z$
does not have the BAP. Indeed, Prof. Szankowski has kindly informed us
the classical Enflo-Davie example provides a subspace $S$ of
$c_0$ without AP for which he proved that $c_0/S$ has the BAP (and
then, all quotients $\mathcal L_\infty/S$ have the BAP). Actually
$c_0/S$ is isomorphic to $c_0( {\ell_2}^{2^k})$ and thus its dual
$\ell_1( {\ell_2}^{2^k})$ also has the  BAP. Thus, there is an exact
sequence
$$\begin{CD}
0@>>> \ell_1( {\ell_2}^{2^k}) @>>> \ell_1 @>>> S^*@>>> 0
\end{CD}$$
in which $S^*$ fails to have the BAP. It is likely that
$co_z(S^*)$ has the BAP while $S^*$ does not, but we could not
prove it. Since every infinite dimensional $\mathcal L_1$-space
contains a complemented copy of $\ell_1$ one has

\begin{lema} Every infinite dimensional $\mathcal L_1$-space contains a
subspace $R_1$ with the BAP so that $\mathcal L_1/R_1$ does not have the BAP.\end{lema}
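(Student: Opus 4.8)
The plan is to reduce the general statement to the concrete example already constructed for $\ell_1$ and then transfer it to an arbitrary $\mathcal L_1$-space through the complemented copy of $\ell_1$ it contains. Recall that the exact sequence
$$0 \to \ell_1({\ell_2}^{2^k}) \to \ell_1 \to S^* \to 0$$
already exhibits, for the space $\ell_1$, a subspace $R_1 = \ell_1({\ell_2}^{2^k})$ with the BAP whose quotient $\ell_1/R_1 = S^*$ fails the BAP. So the case $\mathcal L_1 = \ell_1$ is settled, and what remains is to pass from $\ell_1$ to an arbitrary infinite dimensional $\mathcal L_1$-space.

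First I would invoke the classical fact of Pe\l czy\'nski that every infinite dimensional $\mathcal L_1$-space contains a complemented subspace isomorphic to $\ell_1$; write $\mathcal L_1 = E \oplus W$ with $E \simeq \ell_1$ and $W$ a topological complement. Transporting the subspace $R_1$ of the previous paragraph into $E$ via the isomorphism $E \simeq \ell_1$, I obtain a subspace $R_1 \subset E \subset \mathcal L_1$ which, being isomorphic to $\ell_1({\ell_2}^{2^k})$, has the BAP.

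The key computation is that, since $R_1$ is contained in the direct summand $E$, the quotient leaves the complement untouched:
$$\mathcal L_1 / R_1 = (E \oplus W)/R_1 \simeq (E/R_1) \oplus W \simeq S^* \oplus W.$$
It then remains to verify that this space fails the BAP. Here I would use that the BAP is inherited by complemented subspaces: if $S^* \oplus W$ had the $\lambda$-BAP, then composing the associated finite rank operators with the inclusion of and the canonical projection onto the first summand would endow $S^*$ with the BAP, contradicting the fact that $S^*$ fails it. Hence $\mathcal L_1/R_1 \simeq S^* \oplus W$ does not have the BAP, which finishes the argument.

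I do not expect a genuine obstacle in this last lemma: the only substantial input is the non-trivial Enflo--Davie--Szankowski example underlying the $\ell_1$ case, which is taken as given, together with the structural theorem that infinite dimensional $\mathcal L_1$-spaces contain complemented copies of $\ell_1$. The remaining steps --- the splitting of the quotient across a direct summand and the permanence of the BAP under complementation --- are entirely routine, so the heart of the matter lies in correctly importing $R_1$ into the summand $E$ and keeping track of the complement $W$.
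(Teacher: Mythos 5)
Your proposal is correct and follows essentially the same route as the paper, which likewise combines the exact sequence $0\to \ell_1({\ell_2}^{2^k})\to \ell_1\to S^*\to 0$ with Pe\l czy\'nski's theorem that every infinite dimensional $\mathcal L_1$-space contains a complemented copy of $\ell_1$. You merely write out the routine details (the identification $\mathcal L_1/R_1\simeq S^*\oplus W$ and the permanence of the BAP under complementation) that the paper leaves implicit.
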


On the other hand, the space $\ell_p$ admits for $1<p<2$ a
subspace without the BAP \cite{szanksub}, so $\ell_p$ admits a
quotient without BAP for $2<p<+\infty$ $\ell_p$. Since $\ell_p$ is
a quotient of $C[0,1]$ for $2<p<+\infty$, the existence of
subspaces $J$ of $C[0,1]$ such that $C[0,1]/J$ does not have the
BAP is clear (this short line was mentioned to us by Bill
Johnson). In particular, $J$ does not have the BAP and no quotient
$\mathcal L_\infty/J$ can have the BAP. The existence of $J$ means
that the claim \cite[Thm. 7.1]{castmoreproc} \emph{``Every separable
Banach space $X$ can be embedded into some $\mathcal L_\infty$
space in such a way that [they verify some additional properties
and] $\mathcal L_\infty/X$ has the BAP"} is wrong. Thus, Zippin's
result --\emph{every separable Banach space $X$ can be embedded
into some $Z(X)$ in such a way that [they verify the same
additional properties as before and] $Z(X)/X$ has FDD }
\cite{zippill} cannot be (easily) improved.


\begin{thebibliography}{WW}


\bibitem{cabecastuni} F. Cabello S\'{a}nchez and J.M.F. Castillo,
\emph{Uniform boundedness and twisted sums of Banach spaces},
Houston J. Math. 30 (2004), 523--536

\bibitem{casa} P.G. Casazza, \emph{Approximation Properties}, in
Handbook of the Geometry of Banach Spaces, vol. I, W.B. Johnson
amd J. Lindenstrauss (eds.), North Holland 2001, pp.271-316.


\bibitem{castperdu} J.M.F. Castillo, \emph{Banach spaces, a la recherche du temps perdu}, Extracta
Math. 15 (2000) 291-334.

\bibitem{castgonz} J.M.F. Castillo and  M. Gonz\'alez,
\emph{Three-space problems in Banach space theory}, Springer
Lecture Notes in Math. 1667,  1997.

\bibitem{castmoreproc} J.M.F. Castillo and  Y. Moreno,
\emph{The category of exact sequences of Banach spaces,}
Proceedings of the V Conference on Banach spaces, Caceres 2004;
J.M.F. Castillo and W.B. Johnson (eds.). London Mathematical
Society Lecture Notes Series, Cambridge University Press.


\bibitem{castmoreisr} J.M.F. Castillo and  Y. Moreno,
\emph{On the Lindenstrauss-Rosenthal theorem}, Israel J. Math. 140
(2004) 253-270

\bibitem{castmoresob} J.M.F. Castillo and  Y. Moreno,
\emph{Sobczyk's theorem and the Bounded Approximation Property},
Studia Math. 201 (2010), 1-19.



\bibitem{fjp} T. Figiel, W.B. Johnson and A. Pe\l czy\'nski, \emph{Some
approximation properties of Banach spaces and Banach lattices},
Israel J. Math 183 (2011) 199-232.

\bibitem{godkalt} G. Godefroy and N. J. Kalton, \emph{Lipschitz-free Banach spaces}, Studia Math. 159 (2003), 121--141.


\bibitem{god} G. Godefroy and P. Saphar, \emph{Three-space problems for the
approximation properties}, Proc. Amer. Math. Soc. 105 (1989)
70-75.

\bibitem{hiltstam} \emph{E. Hilton and K. Stammbach},
A course in homological algebra, Graduate Texts in Mathematics 4,
Springer-Verlag 1970.


\bibitem{kaltloc} N. J. Kalton, \emph{Locally complemented subspaces
and $\mathcal L_p$ spaces for $0<p<1$}, Math. Nachr. 115 (1984)
71--97.

\bibitem{kaltpeck} N.J. Kalton and N.T. Peck, \emph{Twisted sums of sequence spaces and the three-space problem},
 Tran. Amer. Math. Soc. 255 (1979) 1-30.


\bibitem{lindjames} J. Lindenstrauss, \emph{On James's paper ``Separable conjugate spaces"}, Israel J. Math. 9 (1971) 279-284.

\bibitem{lindtzaf} J. Lindenstrauss and L. Tzafriri, \emph{Classical Banach
spaces I, sequence spaces}, Ergeb. Math. 92, Springer-Verlag 1977.

\bibitem{lusk1} W. Lusky, \emph{Three-space problems and basis extensions}, Israel J.
Math.107 (1988) 17-27

\bibitem{lusk2}  W. Lusky, \emph{Three-space problems and bounded approximation
property}, Studia Math. 159 (2003) 417-434.


\bibitem{szanksub} A. Szankowski, \emph{Subspaces without the approximation property}, Israel J. Math.
30 (1978) 123-129.

\bibitem{sz3} A. Szankowski, \emph{Three-space problems for the
approximation property}, J. Eur. Math. Soc. 11 (2009) 273-282.

\bibitem{zippill} M. Zippin, \emph{The embedding of Banach
spaces into spaces with structure}, Illinois J. Math. 34 (1990)
586\,--\,606.




\end{thebibliography}
\end{document}